\documentclass[a4paper,11pt]{article}

\usepackage[applemac]{inputenc}
\usepackage{enumerate}
\usepackage{lmodern}
\usepackage[T1]{fontenc}
\usepackage{verbatim}
\usepackage{textcomp}
\usepackage[english]{babel}
\usepackage[a4paper,vmargin={3.5cm,3.5cm},hmargin={2.5cm,2.5cm}]{geometry}
\usepackage[font=sf, labelfont={sf,bf}, margin=1cm]{caption}
\usepackage[pdftex]{hyperref}
\usepackage[pdftex]{color,graphicx}

\usepackage{amsmath,amsfonts,amssymb,amsthm,mathrsfs}

\usepackage{colonequals} 

\newtheorem{theorem}{Theorem}[]
\newtheorem{definition}{Definition}[]
\newtheorem{proposition}[theorem]{Proposition}
\newtheorem{lemma}[theorem]{Lemma}

\theoremstyle{definition}


\def\cc{{\mathcal C}}

\def\t{{\mathcal T}}

\def\ve{\varepsilon}

\def\T{{\mathbb T}}
\def\R{{\mathbb R}}

\def\N{{\mathbb N}}

\def\build#1_#2^#3{\mathrel{\mathop{\kern 0pt#1}\limits_{#2}^{#3}}}

\def\ind{{\bf 1}_}

\def\AGW{$\mathbf{AGW}_\lambda$ }
\def\reroot{\mathsf{ReRoot}}
\def\join{\!\!-\!\!\!\bullet}

\title{Harmonic measure for biased random walk in a supercritical Galton--Watson tree}

\author{Shen LIN 
\thanks{Supported in part by the grant ANR-14-CE25-0014 (ANR GRAAL)} \\
\small \it Sorbonne Universit\'e, Laboratoire de Probabilit\'es, Statistique et Mod\'elisation, Paris, France \\
\small \textit{E-mail}: \texttt{shen.lin.math@gmail.com} }

\date{\tiny\today}

\begin{document}

\maketitle

\begin{abstract}
We consider random walks $\lambda$-biased towards the root on a Galton--Watson tree, whose offspring distribution $(p_k)_{k\geq 1}$ is non-degenerate and has finite mean $m>1$. In the transient regime $0<\lambda <m$, the loop-erased trajectory of the biased random walk defines the $\lambda$-harmonic ray, whose law is the $\lambda$-harmonic measure on the boundary of the Galton--Watson tree. We answer a question of Lyons, Pemantle and Peres \cite{LPP97} by showing that the $\lambda$-harmonic measure has a.s.~strictly larger Hausdorff dimension than the visibility measure, which is the harmonic measure corresponding to the simple forward random walk. We also prove that the average number of children of the vertices along the $\lambda$-harmonic ray is a.s.~bounded below by $m$ and bounded above by $m^{-1}\sum k^2 p_k$. Moreover, at least for $0<\lambda \leq 1$, the average number of children of the vertices along the $\lambda$-harmonic ray is a.s.~strictly larger than that of the $\lambda$-biased random walk trajectory. We observe that the latter is not monotone in the bias parameter~$\lambda$.

\medskip
\noindent {\bf Keywords.} random walk, harmonic measure, Galton--Watson tree, stationary measure.

\smallskip
\noindent{\bf AMS 2010 Classification Numbers.} 60J15, 60J80. 
\end{abstract}

\section{Introduction}

Consider a Galton--Watson tree $\T$ rooted at $e$ with a non-degenerate offspring distribution $(p_k)_{k\geq 0}$. We suppose that $p_0=0$, $p_k<1$ for all $k\geq 1$, and the mean offspring number $m=\sum_{k\geq 1} k p_k \in (1,\infty)$. So the Galton--Watson tree $\T$ is supercritical and leafless. 
Let $\t$ be the space of all infinite rooted trees with no leaves. 
The law of $\T$ is called the Galton--Watson measure $\mathbf{GW}$ on $\t$.
For every vertex $x$ in $\T$, let $\nu(x)$ stand for its number of children. We denote by $x_*$ the parent of $x$ and by $xi, 1\leq i\leq \nu(x)$, the children of $x$.

For $\lambda\geq 0$, conditionally on $\T$, the $\lambda$-biased random walk $(X_n)_{n\geq 0}$ on $\T$ is a Markov chain starting from the root $e$, such that, from the vertex $e$ all transitions to its children are equally likely, whereas for every vertex $x\in \T$ different from $e$,  
\begin{align*}
P_\T(X_{n+1}=x_* \mid X_n=x) &= \frac{\lambda}{\nu(x)+\lambda},\\
P_\T(X_{n+1}=xi \mid X_n=x) &= \frac{1}{\nu(x)+\lambda}, \quad \mbox{ for every } 1\leq i\leq \nu(x).
\end{align*}
Note that $\lambda=1$ corresponds to the simple random walk on $\T$, and $\lambda=0$ corresponds to the simple \emph{forward} random walk with no backtracking.
Lyons established in~\cite{Ly90} that $(X_n)_{n\geq 0}$ is almost surely transient if and only if $\lambda<m$. Throughout this work, we assume $\lambda <m$ and hence the $\lambda$-biased random walk is always transient. 

For a vertex $x\in \T$, let $|x|$ stand for the graph distance from the root $e$ to $x$. 
Let $\partial \T$ denote the boundary of $\T$, which is defined as the set of infinite rays in $\T$ emanating from the root. 
Since $(X_n)_{n\geq 0}$ is transient, its loop-erased trajectory defines a unique infinite ray $\Xi_\lambda \in \partial \T$, whose distribution is called the $\lambda$-harmonic measure. 
We call $\Xi_\lambda$ the $\lambda$-harmonic ray in $\T$.

For different rays $\xi, \eta\in \partial \T$, let $\xi\wedge \eta$ denote the vertex common to both $\xi$ and $\eta$ that is farthest from the root. We define the metric 
\begin{equation*}
d(\xi,\eta)\colonequals \exp(-|\xi\wedge \eta|) \mbox{ for } \xi, \eta\in \partial \T, \xi\neq \eta.
\end{equation*}
Under this metric, the boundary $\partial \T$ has a.s.~Hausdorff dimension $\log m$. 
Lyons, Pemantle and Peres \cite{LPP95,LPP96} showed the dimension drop of harmonic measure: for all $0\leq \lambda<m$, the Hausdorff dimension of the $\lambda$-harmonic measure is a.s.~a constant $d_\lambda< \log m$.
The 0-harmonic measure associated with the simple forward random walk was called \emph{visibility measure} in \cite{LPP95}. Its Hausdorff dimension is a.s.~equal to the constant $\sum_{k\geq 1} (\log k)p_k =\mathbf{GW}[\log \nu]$, where we write $\nu=\nu(e)$ for the offspring number of the root under $\mathbf{GW}$.

Recently, Berestycki, Lubetzky, Peres and Sly~\cite{BLPS} applied the dimension drop result $d_1<\log m$ to show cutoff for the mixing time of simple random walk on a random graph starting from a typical vertex. 
The Hausdorff dimension of the 0-harmonic measure was similarly used in~\cite{BLPS} and independently used by Ben-Hamou and Salez in~\cite{BHS} to determine the mixing time of the non-backtracking random walk on a random graph. 

The primary result of this work answers a question of Ledrappier posed in~\cite{LPP97}. This question is also stated as Question 17.28 in Lyons and Peres' book \cite{LP-book}. 

\begin{theorem}
\label{thm:dim-harm}
For all $\lambda\in(0,m)$, we have $d_\lambda > \mathbf{GW}[\log \nu]$, meaning that the Hausdorff dimension of the $\lambda$-harmonic measure is a.s.~strictly larger than the Hausdorff dimension of the 0-harmonic measure. Moreover, 
\begin{equation*}
\lim_{\lambda \to 0^+} d_\lambda = \mathbf{GW}[\log \nu] \quad \mbox{ and } \quad \lim_{\lambda \to m^-} d_\lambda = \log m.
\end{equation*}
\end{theorem}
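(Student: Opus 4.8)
The plan is to express $d_\lambda$ as an ergodic average along the $\lambda$-harmonic ray and then compare this average, contribution by contribution, with the one producing the visibility dimension $\mathbf{GW}[\log\nu]$. For a non-root vertex $x$ let $\beta(x)$ be the probability that the $\lambda$-biased walk started at $x$ never reaches $x_*$; this depends only on the subtree $\T_x$ and solves a fixed-point recursion in the $\beta(xi)$. A last-exit decomposition at $x$, using that all transitions from $x$ to its children are equal, shows that, conditionally on $\Xi_\lambda$ passing through $x$, it continues to the child $xi$ with transition probability $q_i(x)=\beta(xi)\big/\sum_{j=1}^{\nu(x)}\beta(xj)$; in particular the harmonic ray is a Markov chain in the given tree. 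Since $d(\xi,\eta)=\exp(-|\xi\wedge\eta|)$, the Hausdorff dimension of $\mu_\lambda$ equals the a.s.\ per-generation decay rate of $\mu_\lambda(\Xi_\lambda|_n)$, and the chain rule rewrites $-\log\mu_\lambda(\Xi_\lambda|_n)$ as $\sum_{k<n}-\log q(\text{step } v_k\to v_{k+1})$.

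\emph{Stationarity and the formula.} Using \AGW and the re-rooting map $\reroot$ I would construct a shift-invariant ergodic law $\mathrm{HARM}_\lambda$ on trees carrying a marked ray whose transitions are the $q_i$ above; the augmentation supplies exactly the independent upward environment seen from a typical ray vertex, and the marginal law of the subtree below such a vertex is an escape-probability (i.e.\ $\beta$-)tilt of $\mathbf{GW}$ favouring bushier subtrees. The ergodic theorem then gives
\begin{equation*}
d_\lambda=\mathbf{E}_{\mathrm{HARM}_\lambda}\big[-\log q_I\big]=\mathbf{E}_{\mathrm{HARM}_\lambda}\big[H(q)\big],\qquad H(q)=-\sum_{i=1}^{\nu}q_i\log q_i .
\end{equation*}
The visibility case is the degenerate instance $\lambda=0$: there $\beta\equiv 1$, the rule is uniform ($q_i\equiv 1/\nu$), $\mathrm{HARM}_0=\mathbf{GW}$, and $H(q)=\log\nu$, recovering $d_0=\mathbf{GW}[\log\nu]$.

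\emph{Strict inequality.} Writing $H(q)=\log\nu-D(q\,\|\,\mathrm{unif})$ with $D\ge 0$ the Kullback--Leibler divergence from the uniform law gives
\begin{equation*}
d_\lambda-\mathbf{GW}[\log\nu]=\Big(\mathbf{E}_{\mathrm{HARM}_\lambda}[\log\nu]-\mathbf{GW}[\log\nu]\Big)-\mathbf{E}_{\mathrm{HARM}_\lambda}\big[D(q\,\|\,\mathrm{unif})\big].
\end{equation*}
The first bracket is strictly positive because for $\lambda>0$ the $\beta$-tilt genuinely shifts mass towards larger offspring numbers, while the subtracted penalty is also strictly positive because the harmonic transitions are non-uniform. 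The crux is that these two positive quantities must be compared at the same order: a soft Jensen bound fails since both vanish as $\lambda\to 0^+$. I would resolve this by exhibiting the whole right-hand side as a single manifestly nonnegative relative-entropy between $\mathrm{HARM}_\lambda$ and the stationary visibility law $\mathbf{GW}$, vanishing only when the two flow rules coincide, i.e.\ only at $\lambda=0$; equivalently, I would show that the Radon--Nikodym cocycle linking the harmonic-stationary and the $\mathbf{GW}$ laws has a strictly positive drift for every $\lambda\in(0,m)$. This reconciliation is the main obstacle of the proof.

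\emph{The two limits.} For $\lambda\to 0^+$ the recursion for $\beta$ gives $\beta(x)\to 1$ with enough uniformity that $q\to\mathrm{unif}$ and $\mathrm{HARM}_\lambda\to\mathbf{GW}$ weakly, whence $d_\lambda\to\mathbf{GW}[\log\nu]$, consistently with the gap above collapsing to $0$. For $\lambda\to m^-$ near-recurrence forces the loop-erased trajectory to spread out, and I would show that $\mathrm{HARM}_\lambda$ converges to the branching (uniform) measure on $\partial\T$, whose dimension is $\log m$; together with the a priori bound $d_\lambda<\log m$ this pins $\lim_{\lambda\to m^-}d_\lambda=\log m$. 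Both limits are made rigorous by the smooth (indeed real-analytic) dependence of the fixed point $\beta$, hence of $\lambda\mapsto d_\lambda$, on the bias parameter throughout $(0,m)$.
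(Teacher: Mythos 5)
Your framework (harmonic-stationary measure, entropy formula $d_\lambda=\mathbf{E}_{\mathrm{HARM}_\lambda}[H(q)]$) matches the paper's, but the central claim $d_\lambda>\mathbf{GW}[\log\nu]$ is not actually proved: you decompose $H(q)=\log\nu-D(q\,\|\,\mathrm{unif})$, observe that the tilt gain and the entropy penalty are both positive and both vanish as $\lambda\to0^+$, and then defer the comparison to a hoped-for identity expressing the difference as a single relative entropy ``vanishing only at $\lambda=0$.'' That identity is never exhibited, and it is not clear it exists: the difference of the entropies of two distinct flow rules under two distinct stationary measures is not a Kullback--Leibler divergence in any standard sense. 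This is exactly the step where the paper does its real work, and it avoids your head-on comparison entirely. Writing $\mathrm{HARM}_\lambda^T(u)=\beta(T_u)/\cc(T)$ and using stationarity, the paper gets $d_\lambda=\int\log(\cc(T)+\lambda)\,\mu_{\mathsf{HARM}_\lambda}(\mathrm{d}T)$; the Harris/FKG inequality (both the density $\kappa_\lambda(\cc(T))$ and $\log(\cc(T)+\lambda)$ are increasing functions of independent coordinates) strips off the stationary density at the cost of a strict inequality, reducing the problem to $\mathbf{GW}[\log(\cc(T)+\lambda)]>\mathbf{GW}[\log\nu]$; and this last inequality follows from the pointwise concavity bound
\begin{equation*}
\log(\cc(T)+\lambda)=\log\frac{\sum_{i=1}^{\nu}\beta(T_i)}{\beta(T)}\;\geq\;\log\nu+\frac{1}{\nu}\sum_{i=1}^{\nu}\log\beta(T_i)-\log\beta(T),
\end{equation*}
whose $\mathbf{GW}$-expectation telescopes because $\mathbf{GW}\big[\nu^{-1}\sum_i\log\beta(T_i)\big]=\mathbf{GW}[\log\beta(T)]$ (this cancellation needs the integrability $\mathbf{GW}[\log\beta(T)^{-1}]<\infty$, proved separately as Lemma~\ref{lem:mean-resistance-log}, another ingredient absent from your sketch). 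Without some substitute for this chain, your proof of the main inequality is a restatement of the difficulty, not a resolution of it.

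The limit $\lambda\to m^-$ is also not secured by your argument. Weak convergence of $\mathrm{HARM}_\lambda$ to the uniform (branching) measure --- even if established, and even under the extra moment hypothesis $\sum(k\log k)p_k<\infty$ that you would need for the uniform measure to have full dimension --- does not yield $\liminf_{\lambda\to m^-}d_\lambda\geq\log m$, because Hausdorff dimension is not lower semicontinuous under weak convergence of measures. The paper instead reuses the FKG lower bound $d_\lambda>E[\log(\cc(\T^+)+\lambda)]$ and lets $\lambda\to m^-$ by dominated convergence (a.s.\ $\cc(\T^+)\to0$), squeezing against $d_\lambda<\log m$; no extra moment condition is needed. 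Your appeal to real-analytic dependence of $d_\lambda$ on $\lambda$ is likewise unsubstantiated (even monotonicity of $d_\lambda$ is an open question in the paper). The $\lambda\to0^+$ limit, by contrast, is essentially fine and matches the paper's dominated-convergence argument.
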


When $\lambda$ increases to the critical value $m$, it is non-trivial that the support of the $\lambda$-harmonic measure has its Hausdorff dimension tending to that of the whole boundary. 
Besides, Jensen's inequality implies $\mathbf{GW}[\log \nu]> -\log\mathbf{GW}[\nu^{-1}]$. The preceding theorem thus improves the lower bound $d_\lambda>-\log\mathbf{GW}[\nu^{-1}] $ shown by Vir\'ag in Corollary 7.2 of~\cite{V2000}.

The proof of Theorem~\ref{thm:dim-harm} originates from the construction of a probability measure $\mu_{\mathsf{HARM}_\lambda}$ on~$\t$ that is stationary and ergodic for the harmonic flow rule. In Section \ref{sec:harm-invar} below, its Radon--Nikod\'ym derivative with respect to $\mathbf{GW}$ is given by \eqref{eq:density-Harm}. 
Note that an equivalent formula is also obtained independently by Rousselin \cite{Rou}.
We derive afterwards an explicit expression for the dimension $d_\lambda$, and prove Theorem~\ref{thm:dim-harm} in Section~\ref{sec:dim-harm}.
Our way to find the harmonic-stationary measure $\mu_{\mathsf{HARM}_\lambda}$ is inspired by a recent work of A\"id\'ekon~\cite{Aid}, in which he found the explicit stationary measure of the environment seen from a $\lambda$-biased random walk. It renders possible an application of the ergodic theory on Galton--Watson trees developed in \cite{LPP95} to the biased random walk. After introducing the escape probability of $\lambda$-biased random walk on a tree in Section~\ref{sec:esc}, we will give a precise description of A\"id\'ekon's stationary measure in Section~\ref{sec:agw}.

\smallskip
Apart from the Hausdorff dimension of harmonic measure, another quantity of interest is the average number of children of vertices visited by the harmonic ray $\Xi_\lambda$ or the $\lambda$-biased random walk $(X_n)_{n\geq 0}$ on $\T$. For an infinite path $\overset \rightarrow x=(x_k)_{k\geq 0}$ in $\T$, if the limit
\begin{equation*}
\lim_{n\to \infty} \frac{1}{n}\sum_{k=0}^n \nu(x_k)
\end{equation*}
exists, we call it the average number of children of the vertices along the path $\overset \rightarrow x$.
Section~\ref{sec:average-nb-child} will be devoted to comparing the average number of children of vertices along different random paths in $\T$. The main results in this direction are summarized in the following way.

\begin{theorem}
\label{thm:nb-children}

\begin{enumerate}
\item[(i)] For all $\lambda\in(0,m)$, the average number of children of the vertices along the $\lambda$-harmonic ray $\Xi_\lambda$ is a.s.~strictly larger than $m$, and strictly smaller than $m^{-1} \sum k^2 p_k$;
\item[(ii)] The average number of children of the vertices along the $\lambda$-biased random walk $(X_n)_{n\geq 0}$ is a.s.~strictly smaller than $m$ when $\lambda \in(0,1)$, equal to $m$ when $\lambda=0$ or $1$, and strictly larger than $m$ when $\lambda\in (1,m)$;
\item[(iii)] For $\lambda \in (0,1]$, the average number of children of the vertices along the $\lambda$-harmonic ray $\Xi_\lambda$ is a.s.~strictly larger than the average number of children of the vertices along the $\lambda$-biased random walk $(X_n)_{n\geq 0}$. 
\end{enumerate}
\end{theorem}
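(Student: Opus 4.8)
The plan is to reduce each of the three averages to the expectation of the root offspring number $\nu$ under an appropriate stationary and ergodic measure, and then to compute and compare these expectations. For the $\lambda$-harmonic ray I would use the harmonic-stationary measure $\mu_{\mathsf{HARM}_\lambda}$, whose $\mathbf{GW}$-density is \eqref{eq:density-Harm}; for the $\lambda$-biased walk I would use A\"id\'ekon's environment-seen-from-the-particle measure $\mathbf{AGW}_\lambda$ from Section~\ref{sec:agw}. Since both are stationary and ergodic for the relevant shift, Birkhoff's ergodic theorem gives a.s.\ $\frac1n\sum_{k=0}^n\nu(x_k)\to\mathbf{E}[\nu]$ under the corresponding measure: the harmonic average equals $\mu_{\mathsf{HARM}_\lambda}[\nu]$, and the walk average equals $\mathbf{E}_{\mathbf{AGW}_\lambda}[\nu]$ (for the walk one applies the ergodic theorem to the Markov chain ``environment seen from the particle'', using that $\mathbf{AGW}_\lambda$ is its invariant ergodic law). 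In particular all three limits exist and are a.s.\ constant, so only two deterministic numbers must be analysed; integrability of $\nu$ under the tilted laws holds under the stated second-moment hypothesis where the upper bound in (i) is finite.

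For part (i) I would read off from \eqref{eq:density-Harm} that $\mu_{\mathsf{HARM}_\lambda}$ re-weights $\nu$ by a factor $w_k$ that is a strictly increasing function of $k$ (more children give the harmonic flow more ways to leave the vertex), so $\mu_{\mathsf{HARM}_\lambda}[\nu]$ is a size-bias-type tilt of $(p_k)$. The lower bound $\mu_{\mathsf{HARM}_\lambda}[\nu]>m$ then follows from a correlation (Chebyshev sum / FKG) inequality: tilting $(p_k)$ by an increasing function of $k$ raises the mean of $\nu$ above the untilted mean $m=\mathbf{GW}[\nu]$. For the upper bound I would compare the harmonic tilt with the pure size-bias tilt $kp_k/m$, whose mean is exactly $m^{-1}\sum k^2p_k$; showing that $w_k/k$ is strictly decreasing in $k$ (i.e.\ $w_k$ grows strictly slower than linearly) and applying the same correlation inequality in reverse gives $\mu_{\mathsf{HARM}_\lambda}[\nu]<m^{-1}\sum k^2p_k$. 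Strictness in both bounds uses the non-degeneracy of $(p_k)$.

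For part (ii), the case $\lambda=0$ is immediate: the forward walk chooses a uniform child at each step, so under $\mathbf{GW}$ the offspring numbers $\nu(X_0),\nu(X_1),\dots$ are i.i.d.\ with law $(p_k)$ and the average is $m$ by the law of large numbers. For general $\lambda$ I would compute $\mathbf{E}_{\mathbf{AGW}_\lambda}[\nu]$ from the explicit density of $\mathbf{AGW}_\lambda$, which is built from the escape-type variable $\beta$ of Section~\ref{sec:esc} governed by a recursive distributional equation such as $\beta=\sum_i\beta_i/(\lambda+\sum_i\beta_i)$. The outcome is again a $\lambda$-dependent tilt of $(p_k)$, and I would write $\mathbf{E}_{\mathbf{AGW}_\lambda}[\nu]-m$ in a form whose sign is governed by a correlation under $\mathbf{GW}$ between $\nu$ and a $\lambda$-dependent function of the subtree escape probabilities: this function is increasing in $\nu$ for $\lambda>1$ and decreasing for $\lambda<1$, so the correlation inequality gives $\mathbf{E}_{\mathbf{AGW}_\lambda}[\nu]>m$ on $(1,m)$ and $<m$ on $(0,1)$, while at $\lambda=1$ reversibility of the simple random walk produces an algebraic cancellation forcing the exact value $m$.

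Finally, part (iii) is a direct consequence: by (i) the harmonic average is $>m$, whereas by (ii) the walk average is $\le m$ for $\lambda\in(0,1]$ (strictly $<m$ on $(0,1)$ and $=m$ at $\lambda=1$), so on all of $(0,1]$ the harmonic average strictly exceeds the walk average. The main obstacle is the sharp analysis in (ii): extracting the tilting factor from $\mathbf{AGW}_\lambda$ in a form explicit enough to both prove the clean identity $\mathbf{E}_{\mathbf{AGW}_\lambda}[\nu]=m$ at $\lambda=1$ and pin down the sign of $\mathbf{E}_{\mathbf{AGW}_\lambda}[\nu]-m$ for all other $\lambda$, given that $\beta$ is only characterised implicitly by a fixed-point equation and has no closed form. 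The correlation inequalities underlying (i) and the sign in (ii) should be routine once the relevant tilting factors are shown to be monotone in $\nu$; establishing that monotonicity from the implicit law of $\beta$ is the delicate point.
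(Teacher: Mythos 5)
Your architecture coincides with the paper's: identify the two averages, via Birkhoff's ergodic theorem for the stationary ergodic chains, with $\int \nu(e)\,\mu_{\mathsf{HARM}_\lambda}(\mathrm{d}T)$ and $\int \nu^+(e)\,\mathbf{AGW}_\lambda(\mathrm{d}T,\mathrm{d}\xi)$, and then compare tilted means of the offspring number by FKG/Chebyshev correlation inequalities. For part (i) your sketch is essentially the paper's proof and is easy to complete: conditioning on $\nu=k$, the tilting weight is $A(k)=E\big[\beta(\T)\sum_{i\le k}\beta(\T_i^+)\big/\big(\lambda-1+\beta(\T)+\sum_{i\le k}\beta(\T_i^+)\big)\big]$, which is strictly increasing in $k$ because $x\mapsto \beta x/(\lambda-1+\beta+x)$ is increasing, while by exchangeability $A(k)/k=E\big[\beta(\T)\beta(\T_1^+)\big/\big(\lambda-1+\beta(\T)+\sum_{i\le k}\beta(\T_i^+)\big)\big]$ is strictly decreasing; your two correlation inequalities (the second applied under the size-biased law $\hat\nu$) then give both bounds exactly as in the paper. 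One small correction: finiteness of $\int\nu(e)\,\mu_{\mathsf{HARM}_\lambda}(\mathrm{d}T)$ needs no second-moment hypothesis, since the Radon--Nikod\'ym weight is bounded by $1$; the moment condition only makes the upper bound in (i) nontrivial.

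The genuine gap is in part (ii), and it is exactly the step you flag and defer: proving that the walk's tilting factor $B_\lambda(k)=E\big[(\lambda+k)\beta(\T)\big/\big(\lambda-1+\beta(\T)+\sum_{i\le k}\beta(\T_i^+)\big)\big]$ is monotone in $k$ with direction given by the sign of $\lambda-1$, and is identically $1$ at $\lambda\in\{0,1\}$. Invoking ``reversibility'' at $\lambda=1$ and an unspecified monotonicity elsewhere does not establish this, and since all of (ii) for $\lambda\notin\{0,1\}$ (hence the strict part of (iii)) rests on it, the argument is incomplete as written. The point you call delicate is in fact closed by pure exchangeability of $(\beta(\T),\beta(\T_1^+),\dots,\beta(\T_{k+1}^+))$, with no further information on the law of $\beta$: first, $B_\lambda(k+1)=E\big[\big((\lambda+k)\beta(\T)+\beta(\T_{k+1}^+)\big)\big/\big(\lambda-1+\beta(\T)+\sum_{i\le k+1}\beta(\T_i^+)\big)\big]$; then, putting $B_\lambda(k+1)-B_\lambda(k)$ over the common (symmetric) denominator, the numerator splits as $\beta(\T_{k+1}^+)(\lambda-1)(1-\beta(\T))$ plus the term $\beta(\T_{k+1}^+)\big(\sum_{i\le k}\beta(\T_i^+)-k\beta(\T)\big)$, whose expectation vanishes by exchangeability. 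What remains has the sign of $\lambda-1$, and the same symmetry gives $B_1(k)=E\big[(1+k)\beta_0/\sum_{j=0}^{k}\beta_j\big]=1$ (the case $\lambda=0$ being degenerate since $\beta\equiv1$). You would need to supply this computation, or an equivalent one, for your proof of (ii) and (iii) to stand.
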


Assertion (iii) above is a direct consequence of assertions (i) and (ii). We conjecture that the same result holds for all $\lambda\in(0,m)$, not merely for $\lambda \in (0,1]$.

Assertion (i) in Theorem \ref{thm:nb-children} was first suggested by some numerical calculations in the case $\lambda=1$ mentioned at the end of Section 17.10 in \cite{LP-book}. By the strong law of large numbers, the average number of children seen by the simple forward random walk is a.s.~equal to $m$. 
On the other hand, the uniform measure on the boundary of $\T$ can be defined by putting mass~1 uniformly on the vertices of level $n$ in $\T$ and taking the weak limit as $n\to \infty$. We say that a random ray in $\T$ is uniform if it is distributed according to the uniform measure on $\partial \T$. 
When $\sum (k\log k)p_k <\infty$, the uniform measure on $\partial \T$ has a.s.~Hausdorff dimension $\log m$, and the uniform ray in $\T$ can be identified with the distinguished infinite path in a size-biased Galton--Watson tree. In particular, the average number of children seen by the uniform ray in $\T$ is equal to $m^{-1} \sum k^2 p_k$. For more details we refer the reader to Section 6 of \cite{LPP95} or Chapter 17 of \cite{LP-book}.
 
The FKG inequality for product measures (also known as the Harris inequality) turns out to be extremely useful in proving Theorem \ref{thm:nb-children}. In Section~\ref{sec:average-nb-child}, assertion (i) will be derived from Propositions \ref{prop:child-harm-visi} and \ref{prop:child-harm-unif}, while assertion (ii) will be shown as Proposition~\ref{prop:child-rw-path}. 

It is worth pointing out that the average number of children seen by the $\lambda$-biased random walk is \emph{not} monotone with respect to $\lambda$, because its right continuity at $0$ (established in Proposition~\ref{prop:child-rw-path-limit-0}), together with assertion (ii) in Theorem \ref{thm:nb-children}, implies that the average number of children seen by the $\lambda$-biased random walk cannot be monotonic nondecreasing for all $\lambda\in(0,1)$. 
This lack of monotonicity might be explained by two opposing effects of having a small bias $\lambda$: on the one hand, it helps the random walk to escape faster to infinity, and a high-degree path is in favour of the escape of the $\lambda$-biased random walk, but on the other hand, small bias implies less backtracking, so the $\lambda$-biased random walk spends less time on high-degree vertices.

We close this introduction by mentioning that the following question from \cite{LPP97} remains open.

\noindent{\bf Question 1.} \textit{Is the dimension $d_\lambda$ of the $\lambda$-harmonic measure nondecreasing for $\lambda\in(0,m)$?}

Taking into account the previous discussion, we find it intriguing to ask a similar question:

\noindent{\bf Question 2.} \textit{Is the average number of children of the vertices along the $\lambda$-harmonic ray in~$\T$ nondecreasing for $\lambda\in(0,m)$? Does the same monotonicity holds for the average number of children of the vertices along the $\lambda$-biased random walk, when $\lambda \in [1,m)$?}

\section{Escape probability and the effective conductance}
\label{sec:esc}

For a tree $T\in \t$ rooted at $e$, we define $T_*$ as the tree obtained by adding to $e$ an extra adjacent vertex $e_*$, called the parent of $e$. The new tree $T_*$ is naturally rooted at $e_*$. 
For a vertex $u\in T$, the descendant tree $T_u$ of $u$ is the subtree of $T$ formed by those edges and vertices which become disconnected from the root of $T$ when $u$ is removed. By definition, $T_u$ is rooted at $u$. 

Unless otherwise stated, we assume $\lambda \in (0,m)$ in the rest of the paper.
Under the probability measure $P_T$, let $(X_n)_{n \geq 0}$ denote a $\lambda$-biased random walk on $T_*$. 
For any vertex $u\in T$, define $\tau_u\colonequals \min\{n\geq 0 \colon X_n=u\}$ the hitting time of~$u$, with the usual convention that $\min \emptyset=\infty$.
Let 
\begin{equation*}
\beta_\lambda(T)\colonequals P_T(\tau_{e_*}=\infty\mid X_0=e)= P_T(\forall n\geq 1, X_n\neq e_* \mid X_0=e)
\end{equation*}
be the probability of never visiting the parent $e_*$ of $e$ when starting from~$e$. 
For notational ease, we will make implicit the dependency in $\lambda$ of the escape probability by writing $\beta(T)=\beta_\lambda(T)$.
For $\mathbf{GW}$-a.e.~$T$, $0<\beta(T)<1$.
By coupling with a biased random walk on $\N$, we see that $\beta(T)>1-\lambda$.
Moreover, Lemma 4.2 of \cite{Aid} shows that 
\begin{equation}
\label{eq:Aid-lemma}
0< \mathbf{GW}\bigg[\frac{1}{\lambda-1+\beta(T)}\bigg]<\infty. 
\end{equation}
For a vertex $u\in T$, $|u|=1$, the probability that a $\lambda$-harmonic ray in $T$ passes through $u$ is 
\begin{equation*}
\frac{\beta(T_u)}{\sum_{|w|=1}\beta(T_w)}.
\end{equation*}

If the tree $T$ is viewed as an electric network, and if the conductance of an edge linking
vertices of level $n$ and $n+1$ is $\lambda^{-n}$, then $\cc_\lambda(T)$ denotes the effective conductance of $T$ from its root to infinity. 
As for the escape probability, we will write $\cc(T)$ for $\cc_\lambda(T)$ to simplify the notation.
Using the link between reversible Markov chains and electric networks, we know that 
\begin{equation}
\label{eq:conductance}
\beta(T)=\cc(T_*)=\frac{\cc(T)}{\lambda +\cc(T)} \quad \mbox{ and }\quad  \cc(T)= \frac{\lambda \beta(T)}{1- \beta(T)}.
\end{equation} 
This relationship between $\beta(T)$ and $\cc(T)$ will be used repeatedly. 
Since $\cc(T)>\beta(T)$, the lower bound $\cc(T)>1-\lambda$ also holds. 
Moreover, for all $x\in \R$, 
\begin{equation*}
\frac{\lambda^{-1} x \, \cc(T)}{(\lambda-1+\cc(T))(1+\lambda^{-1} x)+\lambda^{-1} x}=
\frac{\beta(T)x}{\lambda-1+\beta(T)+x}.
\end{equation*}
Taking $x=\cc(T')$ for another tree $T'$ yields the following identity
\begin{equation}
\label{eq:cond-symmetry}
\frac{\beta(T')\cc(T)}{\lambda-1+\beta(T')+\cc(T)}=\frac{\beta(T)\cc(T')}{\lambda-1+\beta(T)+\cc(T')}.
\end{equation}
Using \eqref{eq:conductance} we can also verify that
\begin{align}
\label{eq:cond-identity}
(\lambda-1+\beta(T)+\cc(T'))(1+\lambda^{-1} \cc(T)) & =\lambda(1+\lambda^{-1} \cc(T))(1+\lambda^{-1} \cc(T'))-1 \nonumber \\
& = (\lambda-1+\beta(T')+\cc(T))(1+\lambda^{-1} \cc(T')). 
\end{align}

The following integrability result will be used to prove the inequality $d_\lambda>\mathbf{GW}[\log \nu]$. 

\begin{lemma}
\label{lem:mean-resistance-log}
For $0<\lambda<m$, we have $\mathbf{GW}\big[\log \frac{1}{\beta(T)}\big]<\infty$. 
\end{lemma}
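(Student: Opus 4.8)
The plan is to control the lower tail of the effective conductance $\cc(T)$ and then transfer this control to $\beta(T)$. By \eqref{eq:conductance} we have $1/\beta(T)=1+\lambda/\cc(T)$, and since $1+\lambda/\cc(T)\le (1+\lambda)\max(1,1/\cc(T))$ this gives
\begin{equation*}
\log\frac{1}{\beta(T)}\le \log(1+\lambda)+\Bigl(\log\frac{1}{\cc(T)}\Bigr)_+.
\end{equation*}
Hence it suffices to prove $\mathbf{GW}\bigl[(\log\tfrac{1}{\cc(T)})_+\bigr]<\infty$, for which I would establish a power-law bound on the left tail of $\cc(T)$: that there exist constants $C,\alpha>0$ with $q(\varepsilon)\colonequals \mathbf{GW}(\cc(T)\le \varepsilon)\le C\varepsilon^{\alpha}$ for all small $\varepsilon>0$. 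Indeed, such a bound yields $\mathbf{GW}\bigl[(\log\tfrac{1}{\cc(T)})_+\bigr]\le \int_0^\infty q(e^{-t})\,dt<\infty$, since $q(e^{-t})\le Ce^{-\alpha t}$ for large $t$. The cases $0<\lambda\le 1$ are immediate: for $\lambda<1$ the bound $\beta(T)>1-\lambda$ gives $\log\tfrac{1}{\beta(T)}<\log\tfrac{1}{1-\lambda}$ deterministically, while for $\lambda=1$ the estimate \eqref{eq:Aid-lemma} reads $\mathbf{GW}[1/\beta(T)]<\infty$, which dominates $\mathbf{GW}[\log\tfrac{1}{\beta(T)}]$ because $\log x\le x$. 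So I would concentrate on the range $1<\lambda<m$.

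The key tool is the recursive structure of $\cc(T)$. Writing $T_{e1},\dots,T_{e\nu}$ for the descendant trees of the children of the root, a series–parallel computation in the electric network (using \eqref{eq:conductance}) gives
\begin{equation*}
\cc(T)=\sum_{i=1}^{\nu}\beta(T_{ei})=\sum_{i=1}^{\nu}\frac{\cc(T_{ei})}{\lambda+\cc(T_{ei})},
\end{equation*}
where, conditionally on $\nu$, the subtrees $T_{ei}$ are i.i.d.\ with law $\mathbf{GW}$. Since every summand is nonnegative, the event $\{\cc(T)\le\varepsilon\}$ forces $\beta(T_{ei})\le\varepsilon$, i.e.\ $\cc(T_{ei})\le \lambda\varepsilon/(1-\varepsilon)$, for each $i$. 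Conditioning on $\nu$ and using independence, this produces the functional inequality
\begin{equation*}
q(\varepsilon)\le g\Bigl(q\bigl(\tfrac{\lambda\varepsilon}{1-\varepsilon}\bigr)\Bigr),\qquad\text{where } g(s)=\sum_{k\ge 1}p_k s^k=\mathbf{GW}[s^{\nu}]
\end{equation*}
is the offspring generating function.

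I would then iterate this inequality. Set $\phi(\varepsilon)=\lambda\varepsilon/(1-\varepsilon)$ and $\varepsilon_0=\varepsilon$, $\varepsilon_{n+1}=\phi(\varepsilon_n)$. Because $\phi'(0)=\lambda>1$, the map $\phi$ is expanding near $0$, so from a small $\varepsilon$ the iterates $\varepsilon_n$ grow geometrically until they exceed a fixed threshold $c_0<1$. Choosing $c_0$ small enough that $q(c_0)\le\tfrac12$ (possible since $\cc(T)>0$ $\mathbf{GW}$-a.s., whence $q(\varepsilon)\to0$) and letting $n^\ast$ be the largest index with $\varepsilon_{n^\ast}\le c_0$, the monotonicity of $g$ lets me chain the functional inequality into
\begin{equation*}
q(\varepsilon)\le g^{\circ n^\ast}\bigl(q(\varepsilon_{n^\ast})\bigr)\le g^{\circ n^\ast}(\tfrac12),
\end{equation*}
where $g^{\circ n}$ denotes $n$-fold composition. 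The comparison $\varepsilon_{n+1}\le \tfrac{\lambda}{1-c_0}\varepsilon_n$ valid while $\varepsilon_n\le c_0$ shows $n^\ast\ge c'\log(1/\varepsilon)-1$ for some $c'>0$. Finally, since the offspring law is supercritical, non-degenerate and satisfies $p_0=0$, its generating function has $g'(0)=p_1<1$, so $0$ is an attracting fixed point with vanishing extinction probability and $g^{\circ n}(\tfrac12)\to0$ geometrically, say $g^{\circ n}(\tfrac12)\le C_0\rho^{n}$ for some $\rho\in(0,1)$. Combining the last two displays gives $q(\varepsilon)\le C\varepsilon^{\alpha}$ with $\alpha=c'\log(1/\rho)>0$, as required.

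The main obstacle is the quantitative iteration in the last step: one must calibrate the number of steps $n^\ast$ needed to inflate $\varepsilon$ up to the threshold $c_0$ against the geometric rate at which $g^{\circ n}(\tfrac12)$ decays to the extinction probability $0$. Establishing this geometric decay uniformly — and in the case $p_1=0$, where the decay is in fact doubly exponential and the estimate only improves — is where the hypotheses $m>1$ and $p_0=0$ enter decisively, through $g'(0)=p_1<1$ and the vanishing of the extinction probability.
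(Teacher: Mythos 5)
Your proof is correct, but it takes a genuinely different route from the paper's. The paper works directly with the random variable $\log\frac{1}{\beta(T)}$: writing $\frac{1}{\beta(T)}=1+\lambda\big(\sum_{i}\beta(T_i)\big)^{-1}$, it bounds $\log\frac{1}{\sum_i\beta(T_i)}$ by $\big(\log\frac{1}{\beta(T_1)}\big)\ind{\{\beta(T_i)\le\ve,\,\forall i\ge2\}}+\log\ve^{-1}$ (if some sibling has $\beta(T_i)>\ve$ the sum already exceeds $\ve$), then takes truncated expectations $M_n\colonequals\mathbf{GW}[n\wedge\log\frac{1}{\beta(T)}]$ and uses the branching property to obtain the self-bounding inequality $M_n\le C_\ve+A_\ve M_n$ with $A_\ve=\mathbf{GW}[q_\ve^{\nu-1}]<1$, which gives a bound on $M_n$ uniform in $n$. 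You instead prove a quantitative left-tail estimate $\mathbf{GW}(\cc(T)\le\ve)\le C\ve^{\alpha}$ by iterating the functional inequality $q(\ve)\le g\big(q(\lambda\ve/(1-\ve))\big)$ through the offspring generating function, calibrating the $\Theta(\log(1/\ve))$ expansion steps of $\ve\mapsto\lambda\ve/(1-\ve)$ (valid for $\lambda>1$; the cases $\lambda\le1$ being dispatched by $\beta(T)>1-\lambda$ and by \eqref{eq:Aid-lemma}) against the geometric decay of $g^{\circ n}(\tfrac12)$ coming from $p_0=0$ and $p_1<1$. Both arguments exploit the same structural fact --- $\cc(T)=\sum_i\beta(T_i)$ can only be small if every child's escape probability is simultaneously small, an event whose probability is governed by $g$ --- but your version buys more: a power-law lower tail for $\cc(T)$, hence finiteness of all moments of $\log\frac{1}{\beta(T)}$ and even of $\beta(T)^{-\alpha'}$ for small $\alpha'>0$; the price is the case split in $\lambda$ and the two-scale iteration, whereas the paper's one-shot truncation argument is shorter and treats all $\lambda\in(0,m)$ uniformly.
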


\begin{proof}
Let $T_1,\ldots, T_\nu$ be the descendant trees of the children of the root in $T$. By the parallel law of conductances, $\beta(T)=\sum_{i=1}^\nu \beta(T_i)$. Recall that 
\begin{equation*}
\frac{1}{\beta(T)}=\frac{\cc(T)+\lambda}{\cc(T)} = 1+\frac{\lambda}{\sum_{i=1}^\nu \beta(T_i)}.
\end{equation*}
Taking $x=\lambda (\sum_{i=1}^{\nu} \beta(T_i))^{-1}$ and $x_0=\lambda \nu^{-1}\leq x$ in the inequality $\log(1+x)\leq \log x +\log (1+x_0^{-1})$,
we deduce that 
\begin{equation*}
\log \frac{1}{\beta(T)} \leq \log (1+\lambda^{-1} \nu) +\log \lambda +\log \frac{1}{\sum_{i=1}^{\nu} \beta(T_i)}.
\end{equation*}
Let $\ve<1$ be some positive number. Then, 
\begin{equation*}
\log \frac{1}{\sum_{i=1}^{\nu} \beta(T_i)} \leq \bigg(\log \frac{1}{\beta(T_1)}\bigg)\ind{\{\beta(T_i)\leq \ve, \forall i\geq 2\}}+\log\ve^{-1}.
\end{equation*}
By convention, the indicator function above is equal to 1 over the event $\{\nu=1\}$.
Taking expectation gives 
\begin{equation*}
\mathbf{GW}\bigg[n\wedge \log \frac{1}{\beta(T)} \bigg] \leq \mathbf{GW}\big[\log (\lambda+\nu)\big] +\log \ve^{-1}+ \mathbf{GW}\bigg[n \wedge \log \frac{1}{\beta(T)} \bigg] \mathbf{GW}\big[q_\ve ^{\nu-1}\big],
\end{equation*}
where $q_\ve\colonequals \mathbf{GW}(\beta(T)\leq \ve)$. Since $q_\ve \to 0$ when $\ve \to 0$, we can take $\ve$ small enough such that 
\begin{equation*}
A_\ve \colonequals \mathbf{GW}[q_\ve ^{\nu-1}]<1. 
\end{equation*}
Hence, we obtain 
\begin{equation*}
\mathbf{GW}\bigg[n\wedge \log \frac{1}{\beta(T)} \bigg] \leq \frac{\mathbf{GW}\big[\log (\lambda+\nu)\big] +\log \ve^{-1}}{1-A_\ve}.
\end{equation*}
Taking the limit $n\to \infty$ finishes the proof.
\end{proof}

\section{Stationary measure of the tree seen from random walk}
\label{sec:agw}

We set up some notation before presenting A\"id\'ekon's stationary measure.
For a~rooted tree $T\in \t$, its boundary $\partial T$ is the set of all rays starting from the root. Clearly, one can identify $\partial T_*$ with $\partial T$.
Let 
\begin{equation*}
\t^* \colonequals \{(T, \xi)\mid T\in \t, \xi=(\xi_n)_{n\geq 0} \in \partial T \}
\end{equation*}
denote the space of trees with a marked ray. By definition, $\xi_0$ coincides with the root vertex of~$T$.
If $T_1$ and $T_2$ are two trees rooted respectively at $e_1$ and $e_2$, we define $T_1 \join T_2$ as the tree rooted at the root $e_2$ of $T_2$ formed by joining the roots of $T_1$ and $T_2$ by an edge. The root $e_2$ is the parent of $e_1$ in $T_1 \join T_2$, thus we will not distinguish $e_2$ from $(e_1)_*$.
Given a ray $\xi\in \partial T$, there is a unique tree $T^+$ such that $T =T_{\xi_1} \join T^+$. Therefore, $\t^*$ is in bijection with the space
\begin{equation*}
\big \{(T \join T^+, \xi)\mid T, T^+ \in \t, \xi=(\xi_n)_{n\geq 0} \in \partial T \big\}.
\end{equation*}

Introducing a marked ray helps us to keep track of the past trajectory of the biased random walk. In particular, the initial starting point of the random walk, towards which the bias is exerted, would be represented by the marked ray at infinity. 
To be more precise, if we assign a vertex $u\in T$ to be the new root of the tree $T$, the re-rooted tree will be written as $\reroot(T,u)$. Given $\xi=(\xi_n)_{n\geq 0} \in \partial T$, we say that $x$ is the $\xi$-parent of $y$ in $T$ if $x$ becomes the parent of $y$ in the tree $\reroot(T,\xi_n)$ for all sufficiently large $n$. A random walk on $T$ is \emph{$\lambda$-biased towards $\xi$} if the random walk always moves to its $\xi$-parent with probability $\lambda$ times that of moving to one of the other neighbors. 

We consider the Markov chain on $\t^*$ that, starting from some fixed tree $T$ with a marked ray $\xi=(\xi_n)_{n\geq 0}$, is isomorphic to a random walk on $T$ $\lambda$-biased towards $\xi$. Recall that $\nu(\xi_0)$ is the number of edges incident to the root. 
The transition probabilities $\mathbf{p}_{\mathsf{RW}_\lambda}$ of this Markov chain are defined as follows: 
\begin{itemize}
\item If $T'=\reroot(T, x)$ and $\xi'=(x,\xi_0,\xi_1, \xi_2, \ldots)$ with a vertex $x$ adjacent to $\xi_0$ being different from $\xi_1$, 
\[
\mathbf{p}_{\mathsf{RW}_\lambda}((T,\xi), (T',\xi'))=\frac{1}{\nu(\xi_0)-1+\lambda};
\]
\item If $T'=\reroot(T, \xi_1)$ and $\xi'=(\xi_1,\xi_2,\ldots)$, 
\[
\mathbf{p}_{\mathsf{RW}_\lambda}((T,\xi), (T',\xi'))=\frac{\lambda}{\nu(\xi_0)-1+\lambda};
\]
\item Otherwise, $\mathbf{p}_{\mathsf{RW}_\lambda}((T,\xi), (T',\xi'))=0$.
\end{itemize}

We proceed to define the environment measure that is invariant under re-rooting along a $\lambda$-biased random walk. 
Let $\T$ and $\T^+$ be two independent Galton--Watson trees of offspring distribution $(p_k)_{k\geq 0}$.  We write $e$ for the root vertex of $\T$, and $e^+$ for the root vertex of $\T^+$. Let $\nu^+$ denote the number of children of $e^+$ in $\T^+$. Similarly, let $\nu$ denote the number of children of $e$ in $\T$.  Note that the number of children of $e^+$ in $\T \join \T^+$ is $\nu^+ +1$.
Conditionally on $(\T, \T^+)$, let $\mathcal{R}$ be a random ray in $\T$ distributed according to the $\lambda$-harmonic measure on $\partial \T$. We assume that $(\T \join \T^+, \mathcal{R})$ is defined under the probability measure $P$.

\begin{figure}[ht]
	\begin{center}
	\includegraphics[width=11cm]{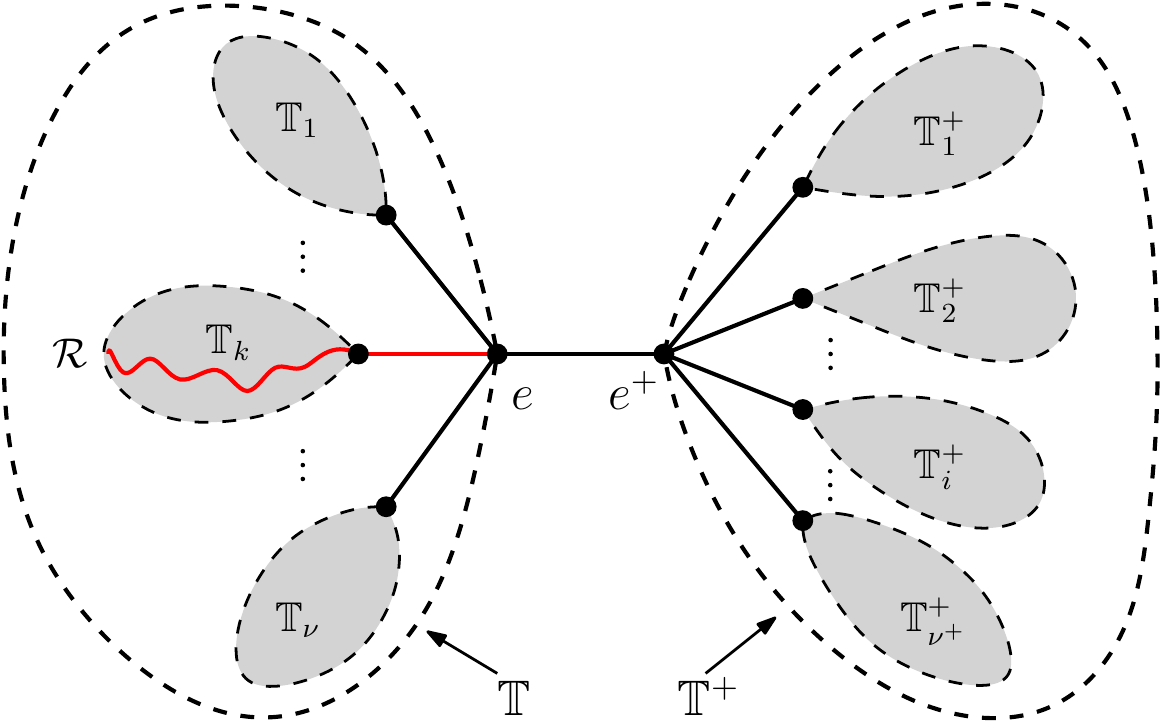}
	\caption{
		\label{fig:agw} 
		The random tree $\T \join \T^+$ rooted at $e^+$ with a marked ray $\mathcal{R}$
		}
	\end{center}
\end{figure}

\begin{definition}
The $\lambda$-augmented Galton--Watson measure \AGW is defined as the probability measure on $\t^*$ that is absolutely continuous with respect to the law of $(\T \join \T^+, \mathcal{R})$ with density 
\begin{equation}
\label{eq:density}
c_\lambda ^{-1} \frac{(\lambda+\nu^+) \beta(\T)}{\lambda-1+\beta(\T)+\cc(\T^+)},
\end{equation}
where 
\begin{equation*}
c_\lambda= E \bigg[ \frac{(\lambda+\nu^+) \beta(\T)}{\lambda-1+\beta(\T)+\cc(\T^+)}\bigg]
\end{equation*}
is the normalizing constant. 
\end{definition}

It follows from the inequality $\lambda-1+\cc(\T^+)>0$ that
\begin{equation*}
c_\lambda=E \bigg[ \frac{(\lambda+\nu^+) \beta(\T)}{\lambda-1+\beta(\T)+\cc(\T^+)}\bigg] < E\big[\lambda+\nu^+ \big]=\lambda+m.
\end{equation*}

Let $\T^+_1,\ldots, \T^+_{\nu^+}$ denote the descendant trees of the children of $e^+$ in $\T^+$. With a slight abuse of notation, let $\T_1,\ldots, \T_{\nu}$ denote the descendant trees of the children of $e$ inside $\T$. See Fig.~\ref{fig:agw} for a schematic illustration.
By the parallel law of conductances, 
\begin{equation}
\label{eq:C-Bsum}
\cc(\T^+)= \sum_{i=1}^{\nu^+} \beta(\T^+_i) \quad \mbox{and} \quad 
\cc(\T)= \sum_{i=1}^{\nu} \beta(\T_i).
\end{equation}
We will frequently use the branching property that conditionally on $\nu^+$, the collection of trees $\{\T,\T^+_1,\ldots, \T^+_{\nu^+} \}$ are independent and identically distributed according to $\mathbf{GW}$. 
 
According to Theorem 4.1 in \cite{Aid}, the $\lambda$-augmented Galton--Watson measure \AGW is the asymptotic distribution of the environment seen from the $\lambda$-biased random walk on $\T$. 

\begin{proposition}
\label{prop:invariance}
The Markov chain with transition probabilities $\mathsf{p}_{\mathsf{RW}_\lambda}$ and initial distribution \AGW is stationary. 
\end{proposition}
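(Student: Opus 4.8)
The plan is to establish one-step invariance, i.e. $\int (\mathbf{p}_{\mathsf{RW}_\lambda}f)\, d\mathbf{AGW}_\lambda = \int f\, d\mathbf{AGW}_\lambda$ for every bounded measurable $f$ on $\t^*$, where $\mathbf{p}_{\mathsf{RW}_\lambda}f$ denotes the action of the one-step operator; stationarity of the chain started from $\mathbf{AGW}_\lambda$ follows at once. Writing $D$ for the density in \eqref{eq:density} without the constant $c_\lambda^{-1}$, and recalling that under $P$ the trees $\T,\T^+$ are independent $\mathbf{GW}$ and $\mathcal{R}$ is $\lambda$-harmonic in $\T$, this reduces to $E[D\cdot \mathbf{p}_{\mathsf{RW}_\lambda}f]=E[D\cdot f]$, the constant cancelling. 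Since the root $e^+$ of $\T\join\T^+$ has $\nu^++1$ neighbours, the operator splits into one backward move, to the first ray-vertex $e$ with weight $\lambda/(\nu^++\lambda)$, and $\nu^+$ forward moves, to the remaining children of $e^+$ with weight $1/(\nu^++\lambda)$ each. The first simplification is that the factor $\lambda+\nu^+$ in $D$ is exactly absorbed by these denominators: $(\lambda+\nu^+)\tfrac{\lambda}{\nu^++\lambda}=\lambda$ for the backward move and $(\lambda+\nu^+)\tfrac{1}{\nu^++\lambda}=1$ for each forward move, which is precisely why $\lambda+\nu^+$ appears in the definition of the density.

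I would then treat the two move types by re-rooting together with the branching property. For the backward move, $\reroot(\T\join\T^+,e)$ decomposes, relative to the shifted ray $(e,\mathcal{R}_1,\dots)$, into the harmonic subtree $\T_{\mathcal{R}_1}$ on the marked side and a new past-tree $U$ rooted at $e$ carrying the subtrees of $e$ not entered by $\mathcal{R}$ together with $\T^+$; by the branching property $U$ is again $\mathbf{GW}$-distributed with $\nu$ children at its root, and by the harmonic recursion the first step $\mathcal{R}_1=i$ has probability $\beta(\T_i)/\cc(\T)$ with the remaining ray harmonic in $\T_i$ (using \eqref{eq:C-Bsum}). For a forward move to a child $x_j$ of $e^+$, re-rooting produces on the marked side the reassembled tree carrying $\T$ and the sibling subtrees $\T^+_i$, $i\neq j$, which is $\mathbf{GW}$ by the branching property, while the new past-tree is exactly the descendant tree $\T^+_j$ of $x_j$.

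Next I would carry each image configuration to the canonical triple $(\hat\T,\hat\T^+,\hat{\mathcal{R}})$ of its own decomposition and check that the accumulated weights reconstitute $D(\hat\T,\hat\T^+,\hat{\mathcal{R}})$ against the same base law, the offspring reindexing being consistent by the branching property. Here the conductance identities do the work: \eqref{eq:conductance} interchanges $\beta$ and $\cc$; \eqref{eq:cond-identity} relates the backward denominator $\lambda-1+\beta(\T)+\cc(\T^+)$, after multiplication by $\lambda+\cc(\T)$, to the image denominator $\lambda-1+\beta(\T^+)+\cc(\T)$; and the symmetry \eqref{eq:cond-symmetry}, which exchanges the tree left behind and the tree entered, reconciles the forward contributions, whose image state is the reverse of a backward move. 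Summing the backward contribution and the $\nu^+$ forward contributions then recovers $E[D\cdot f]$.

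I expect the main obstacle to be exactly this bookkeeping of the re-rooting bijection: for each move one must pin down unambiguously the new triple $(\hat\T,\hat\T^+,\hat{\mathcal{R}})$, the number of children of the new root, and the reparametrization of the base measure, after which the algebraic cancellations supplied by \eqref{eq:cond-symmetry}--\eqref{eq:cond-identity} become visible. Conceptually the delicate point is that the shifted or extended ray is not a priori $\lambda$-harmonic in the new tree, so it is the harmonic reweighting built into the density \eqref{eq:density} that restores the correct conditional law of the marked ray after one step; verifying that this compensation is exact is the crux of the argument.
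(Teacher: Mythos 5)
Your plan is correct and follows the paper's own proof essentially step for step: the same split into the move along the marked ray versus the $\nu^+$ moves into $\T^+$, the same cancellation of $\lambda+\nu^+$ against the transition denominators, the same branching-property relabellings, and the identities \eqref{eq:cond-symmetry} and \eqref{eq:cond-identity} deployed in exactly the cases where the paper uses them. The one step you defer as ``the crux'' --- reconciling the prepended ray with the harmonic law --- is handled in the paper by writing $\mathcal{R}^+$ as the harmonic ray of $\T\join\T^+_{\neq i}$ conditioned to enter $\partial\T$, which brings in the explicit factor $\cc(\T\join\T^+_{\neq i})/\beta(\T)$ and then lets the entry indicators sum to one; this is a finite computation, not a gap, so your outline is sound.
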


\begin{proof}
Let $F\colon \t \to \R^+$ and $G\colon \t^* \to \R^+$ be nonnegative measurable functions. Let $(\tilde \T \join \tilde \T^+, \tilde{\mathcal{R}})$ denote the tree with a marked ray obtained from $(\T \join \T^+, \mathcal{R})$ by performing a one-step transition according to $\mathbf{p}_{\mathsf{RW}_\lambda}$. It suffices to show that 
\begin{equation*}
E\Big[\frac{(\lambda+\nu^+) \beta(\T)}{\lambda-1+\beta(\T)+\cc(\T^+)} F(\tilde \T^+)G(\tilde \T, \tilde{\mathcal{R}})\Big] =
E\Big[\frac{(\lambda+\nu^+) \beta(\T)}{\lambda-1+\beta(\T)+\cc(\T^+)} F(\T^+)G(\T, \mathcal{R})\Big].
\end{equation*}
To compute the left-hand side, we need to distinguish two different situations. 

{\bf Case I}: There exists $1\leq i\leq \nu^+$ such that the root of $\T^+_i$ becomes the new root of $\tilde \T \join \tilde \T^+$. For each $i\in [1,\nu^+]$, it happens with probability $1/(\nu^+ +\lambda)$. In this case, 
\begin{equation*}
\tilde \T^+=\T^+_i \quad \mbox{and}\quad \tilde \T=\T\join \T_{\neq i}^+,
\end{equation*}
where $\T_{\neq i}^+$ stands for the tree rooted at $e^+$ containing only the descendant trees $\{\T^+_j, 1\leq j \leq \nu^+, j\neq i\}$ together with the edges connecting their roots to $e^+$. It is easy to see that $\T^+_i$ and $\T\join \T_{\neq i}^+$ are two i.i.d.~Galton--Watson trees. Meanwhile, $\tilde{\mathcal{R}}\in \partial \tilde \T$ is the ray $\mathcal{R}^+$ obtained by adding the vertex $e^+$ to the beginning of the sequence $\mathcal{R}$. We set accordingly 
\begin{align*}
I & \colonequals E\bigg[ \frac{(\lambda+\nu^+) \beta(\T)}{\lambda-1+\beta(\T)+\cc(\T^+)} \sum_{i=1}^{\nu^+} \frac{1}{\nu^+ +\lambda } F(\T^+_i)G(\T \join \T^+_{\neq i}, \mathcal{R}^+)\bigg] \\
&= E\bigg[ \frac{\beta(\T)}{\lambda-1+\beta(\T)+\cc(\T^+)} \sum_{i=1}^{\nu^+} F(\T^+_i)G(\T \join \T^+_{\neq i}, \mathcal{R}^+)\bigg].
\end{align*}
Given $\T$ and $\T^+$, we let $\mathcal{R}_{\neq i}$ be a random ray in the tree $\T \join \T^+_{\neq i}$ distributed according to the $\lambda$-harmonic measure on the tree boundary. Then $\mathcal{R}^+$ can be identified with $\mathcal{R}_{\neq i}$ conditionally on $\{\mathcal{R}_{\neq i}\in \partial \T\}$. We see that $I$ is equal to
\begin{align*}
& E\bigg[\frac{\beta(\T)}{\lambda-1+\beta(\T)+\cc(\T^+)} \sum_{i=1}^{\nu^+} F(\T^+_i)G(\T \join \T^+_{\neq i}, \mathcal{R}_{\neq i})\ind{\{\mathcal{R}_{\neq i}\in \partial \T\}}\frac{\cc(\T \join \T^+_{\neq i})}{\beta(\T)}\bigg]\\
=\,\,& E\bigg[\frac{1}{\lambda-1+\beta(\T)+\cc(\T^+)} \sum_{i=1}^{\nu^+}  F(\T^+_i)G(\T \join \T^+_{\neq i}, \mathcal{R}_{\neq i})\ind{\{\mathcal{R}_{\neq i}\in \partial \T\}} \cc(\T \join \T^+_{\neq i})\bigg].
\end{align*}
By symmetry, we deduce further that 
\begin{align*}
I  &= E\bigg[ \frac{\cc(\T \join \T^+_{\neq 1})}{\lambda-1+\beta(\T)+\cc(\T^+)} F(\T^+_1)G(\T \join \T^+_{\neq 1},\mathcal{R}_{\neq 1})\Big(\ind{\{\mathcal{R}_{\neq 1}\in \partial \T\}} + \sum_{i=2}^{\nu^+} \ind{\{\mathcal{R}_{\neq 1}\in \partial \T^+_i\}}\Big) \bigg] \\
&= E\bigg[ \frac{\cc(\T \join \T^+_{\neq 1})}{\lambda-1+\beta(\T)+\cc(\T^+)} F(\T^+_1)G(\T \join \T^+_{\neq 1},\mathcal{R}_{\neq 1})\bigg].
\end{align*}
As $\beta(\T)+\cc(\T^+)=\beta(\T)+\sum_{i=1}^{\nu^+}\beta(\T^+ _i)=\beta(\T^+_1)+\cc(\T \join \T^+_{\neq 1})$, we obtain from the previous display that 
\begin{equation*}
I = E\bigg[ \frac{\cc(\T)}{\lambda-1+\beta(\T^+)+\cc(\T)} F(\T^+)G(\T, \mathcal{R})\bigg].
\end{equation*}
Using \eqref{eq:cond-symmetry} and \eqref{eq:conductance}, we get therefore
\begin{align*}
I &= E\bigg[ \frac{\beta(\T)}{\lambda-1+\beta(\T)+\cc(\T^+)}\frac{\cc(\T^+)}{\beta(\T^+)} F(\T^+)G(\T,\mathcal{R})\bigg]\\
&= E\bigg[ \frac{\beta(\T) (\lambda +\cc(\T^+))}{\lambda-1+\beta(\T)+\cc(\T^+)} F(\T^+)G(\T,\mathcal{R})\bigg].
\end{align*}

{\bf Case II}: The vertex $e$ becomes the new root of $\tilde \T \join \tilde \T^+$, which happens with probability $\lambda/(\nu^+ +\lambda)$. 
In this case, if $\mathcal{R}$ passes through the root of $\T_k$ for some integer $k\in [1,\nu]$, then
\begin{equation*}
\tilde \T= \T_k \quad \mbox{and} \quad \tilde \T^+= \T^+ \join \T_{\neq k},
\end{equation*}
where $\T_{\neq k}$ stands for the tree rooted at $e$ formed by all descendant trees $\{\T_\ell, 1\leq \ell\leq \nu, \ell\neq k\}$ together with the edges connecting their roots to $e$. 
As in the previous case, $\T_k$ and $\T^+ \join \T_{\neq k}$ are two independent Galton--Watson trees. But $\tilde{\mathcal{R}}$ is now the ray $\mathcal{R}^-$ obtained by deleting $e$ from the beginning of the sequence $\mathcal{R}$. We set thus
\begin{align*}
I\!I &  \colonequals E\bigg[ \frac{(\lambda+\nu^+) \beta(\T)}{\lambda-1+\beta(\T)+\cc(\T^+)} \frac{\lambda}{\nu^+ +\lambda} \sum_{k=1}^{\nu} F(\T^+ \join \T_{\neq k})G(\T_k, \mathcal{R}^-)\ind{\{\mathcal{R}^-\in \partial \T_k\}}\bigg] \\
&= E\bigg[ \frac{\lambda \beta(\T)}{\lambda-1+\beta(\T)+\cc(\T^+)} \sum_{k=1}^{\nu} F(\T^+ \join \T_{\neq k})G(\T_k, \mathcal{R}^-)\ind{\{\mathcal{R}^-\in \partial \T_k\}}\bigg].
\end{align*}
Given $\T$ and $\T^+$, we let $\mathcal{R}_{k}$ be a random ray in the tree $\T_k$ distributed according to the $\lambda$-harmonic measure. It follows that
\begin{align*}
I\!I & = E\bigg[ \sum_{k=1}^{\nu} \frac{\lambda \beta(\T)}{\lambda-1+\beta(\T)+\cc(\T^+)} F(\T^+ \join \T_{\neq k})G(\T_k, \mathcal{R}_k)\frac{\beta(\T_k)}{\cc(\T)}\bigg] \\
&= E\bigg[ \sum_{k=1}^{\nu} \frac{\beta(\T_k)}{(\lambda-1+\beta(\T)+\cc(\T^+))(1+\lambda^{-1} \cc(\T))} F(\T^+ \join \T_{\neq k})G(\T_k, \mathcal{R}_k)\bigg].
\end{align*}
Using the identity \eqref{eq:cond-identity}, we see that  
\begin{eqnarray*}
(\lambda-1+\beta(\T)+\cc(\T^+))(1+\lambda^{-1} \cc(\T))&= & \big(\lambda-1+\beta(\T^+)+\cc(\T)\big)\big(1+\lambda^{-1} \cc(\T^+)\big)\\
&=& \big(\lambda-1+\beta(\T_k)+\cc(\T^+ \join \T_{\neq k})\big)\big(1+\lambda^{-1} \cc(\T^+)\big).
\end{eqnarray*}
Together with \eqref{eq:conductance}, it implies 
\begin{align*}
I\!I & = E\bigg[ \sum_{k=1}^{\nu} \frac{\beta(\T_k)(1+\lambda^{-1} \cc(\T^+))^{-1}}{\lambda-1+\beta(\T_k)+\cc(\T^+ \join \T_{\neq k})} F(\T^+ \join \T_{\neq k})G(\T_k, \mathcal{R}_k)\bigg] \\
& = E\bigg[ \sum_{k=1}^{\nu} \frac{\beta(\T_k)(1-\beta(\T^+))}{\lambda-1+\beta(\T_k)+\cc(\T^+ \join \T_{\neq k})} F(\T^+ \join \T_{\neq k})G(\T_k, \mathcal{R}_k)\bigg] .
\end{align*}
Observe that the root of $\T^+ \join \T_{\neq k}$ has $\nu$ children. 
For any integer $m\geq k$, the conditional law of $(\T_k, \T^+ \join \T_{\neq k})$ given $\{\nu=m\}$ is the same as that of $(\T,\T^+)$ conditionally on $\{\nu^+=m\}$.
Hence, we obtain
\begin{align*}
I\!I & = E\bigg[ \sum_{k=1}^{\nu^+} \frac{\beta(\T)(1-\beta(\T_k^+))}{\lambda-1+\beta(\T)+\cc(\T^+)} F(\T^+)G(\T, \mathcal{R})\bigg]  \\
& = E\bigg[\frac{\beta(\T)(\nu^+ -\cc(\T^+))}{\lambda-1+\beta(\T)+\cc(\T^+)} F(\T^+)G(\T, \mathcal{R})\bigg] .
\end{align*}

Finally, adding up Cases I and II, we have 
\begin{align*}
& E\Big[\frac{(\lambda+\nu^+) \beta(\T)}{\lambda-1+\beta(\T)+\cc(\T^+)} F(\tilde \T^+)G(\tilde \T, \tilde{\mathcal{R}})\Big] \\
&= E\bigg[ \frac{\beta(\T) (\lambda +\cc(\T^+))}{\lambda-1+\beta(\T)+\cc(\T^+)} F(\T^+)G(\T,\mathcal{R})\bigg]+ E\bigg[\frac{\beta(\T)(\nu^+ -\cc(\T^+))}{\lambda-1+\beta(\T)+\cc(\T^+)} F(\T^+)G(\T, \mathcal{R})\bigg]\\
&= E\Big[\frac{(\lambda+\nu^+) \beta(\T)}{\lambda-1+\beta(\T)+\cc(\T^+)} F(\T^+)G(\T, \mathcal{R})\Big],
\end{align*}
which completes the proof of the stationarity.
\end{proof}

We write $\overset \rightarrow x$ for an infinite path $(x_n)_{n\geq 0}$ in $T$. 
Let $\mathsf{RW}_\lambda\times \mathbf{AGW}_\lambda$ be the probability measure on the space
\begin{equation*}
\big\{(\overset \rightarrow x,(T,\xi))\mid (T,\xi)\in \t^*, \overset \rightarrow x\subset T \big\}
\end{equation*} 
that is associated to the Markov chain considered in Proposition~\ref{prop:invariance}. It is given by choosing a tree $T$ with a marked ray $\xi$ according to $\mathbf{AGW}_\lambda$, and then independently running on $T$ a random walk $\lambda$-biased towards $\xi$.

\section{Harmonic-stationary measure}
\label{sec:harm-invar}

Let $\mathrm{HARM}^T_\lambda$ be the flow on the vertices of $T$ in correspondence with the $\lambda$-harmonic measure on $\partial T$, so that $\mathrm{HARM}_\lambda^T(u)$ coincides with the mass given by the $\lambda$-harmonic measure to the set of all rays passing through the vertex $u$.
We denote by $\mathsf{HARM}_\lambda$ the transition probabilities for a Markov chain on $\t$, that goes from a tree $T$ to the descendant tree $T_u$, $|u|=1$, with probability 
\begin{equation*}
\mathrm{HARM}_\lambda^T(u)= \frac{\beta(T_u)}{\sum_{|w|=1}\beta(T_w)} = \frac{\beta(T_u)}{\cc(T)}.
\end{equation*}

The existence of a $\mathsf{HARM}_\lambda$-stationary probability measure $\mu_{\mathsf{HARM}_\lambda}$ that is absolutely continuous with respect to $\mathbf{GW}$ was established in Lemma 5.2 of \cite{LPP96}. Taking into account the stationary measure of the environment $\mathbf{AGW}_\lambda$, we can construct $\mu_{\mathsf{HARM}_\lambda}$ as an induced measure by considering the $\lambda$-biased random walk at the exit epochs. See~\cite[Section 8]{LPP95} and~\cite[Section 5]{LPP96} for more details.

According to Proposition 5.2 of \cite{LPP95}, $\mu_{\mathsf{HARM}_\lambda}$ is equivalent to $\mathbf{GW}$ and the associated $\mathsf{HARM}_\lambda$-Markov chain is ergodic. Ergodicity implies further that $\mu_{\mathsf{HARM}_\lambda}$ is the unique $\mathsf{HARM}_\lambda$-stationary probability measure absolutely continuous with respect to $\mathbf{GW}$.
Due to uniqueness, we can identify $\mu_{\mathsf{HARM}_\lambda}$ via the next result. 

\begin{lemma}
\label{lemma:harm-invariance}
For every $x>0$, set
\begin{equation*}
\kappa_\lambda(x)\colonequals \mathbf{GW}\bigg[\frac{\beta(T)x}{\lambda-1+\beta(T)+x}\bigg]=
E\bigg[\frac{\beta(\T)x}{\lambda-1+\beta(\T)+x}\bigg].
\end{equation*}
The finite measure $\kappa_\lambda(\cc(T))\mathbf{GW}(\mathrm{d}T)$ is $\mathsf{HARM}_\lambda$-stationary.
\end{lemma}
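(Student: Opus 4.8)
The plan is to verify the stationarity identity by hand. Writing $P$ for the Markov operator attached to $\mathsf{HARM}_\lambda$, so that $(PF)(T)=\sum_{|u|=1}\frac{\beta(T_u)}{\cc(T)}F(T_u)$ for a nonnegative measurable $F\colon\t\to\R^+$, it suffices to prove
\[
\mathbf{GW}\big[\kappa_\lambda(\cc(T))\,(PF)(T)\big]=\mathbf{GW}\big[\kappa_\lambda(\cc(T))\,F(T)\big].
\]
Finiteness of the measure comes for free: since $\beta(T)>1-\lambda$, the denominator $\lambda-1+\beta+x$ is positive and each integrand satisfies $\frac{\beta x}{\lambda-1+\beta+x}\le\beta\le1$, so $\kappa_\lambda(x)\le\mathbf{GW}[\beta(T)]<1$ and the total mass is at most $1$. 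Throughout I let $\T,\T^+,\T_1,\dots$ be independent $\mathbf{GW}$ trees and use Tonelli freely, all integrands being nonnegative.

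First I would massage the right-hand side. Spelling out the $\mathbf{GW}$-integral defining $\kappa_\lambda$ with an independent tree $\T^+$ gives $\kappa_\lambda(\cc(\T))=E_{\T^+}\big[\frac{\beta(\T^+)\cc(\T)}{\lambda-1+\beta(\T^+)+\cc(\T)}\big]$, and the symmetry identity \eqref{eq:cond-symmetry} rewrites this in the symmetric form $E_{\T^+}\big[\frac{\beta(\T)\cc(\T^+)}{\lambda-1+\beta(\T)+\cc(\T^+)}\big]$, so that
\[
\mathbf{GW}\big[\kappa_\lambda(\cc(\T))F(\T)\big]=E\Big[\frac{\beta(\T)\,\cc(\T^+)}{\lambda-1+\beta(\T)+\cc(\T^+)}\,F(\T)\Big]
\]
with $\T,\T^+$ independent. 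This is the target expression I will aim to recover from the other side.

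For the left-hand side I would keep the first form of $\kappa_\lambda$ and use the parallel law \eqref{eq:C-Bsum}, $\cc(\T)=\sum_{i=1}^{\nu}\beta(\T_i)$, to cancel the $1/\cc(\T)$ produced by $P$, obtaining
\[
\mathbf{GW}\big[\kappa_\lambda(\cc(\T))(PF)(\T)\big]=E\Big[\frac{\beta(\T^+)}{\lambda-1+\beta(\T^+)+\cc(\T)}\sum_{i=1}^{\nu}\beta(\T_i)F(\T_i)\Big].
\]
Conditioning on $\nu=n$ and using that $\T_1,\dots,\T_n$ are i.i.d.\ $\mathbf{GW}$, exchangeability replaces the sum by $n$ times the $i=1$ term. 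The decisive step is then a regrouping: I bundle the independent tree $\T^+$ with the non-distinguished children $\T_2,\dots,\T_n$ into one fresh tree $S$ whose root has exactly the $n$ children $\{\T^+,\T_2,\dots,\T_n\}$. By the branching property $S$ is $\mathbf{GW}$ conditioned on having $n$ children and is independent of $\T_1$, while $\cc(S)=\beta(\T^+)+\sum_{i=2}^{n}\beta(\T_i)$ turns the denominator into $\lambda-1+\beta(\T_1)+\cc(S)$. Since the remaining factor $\frac{\beta(\T_1)F(\T_1)}{\lambda-1+\beta(\T_1)+\cc(S)}$ depends on $S$ only through $\cc(S)$, exchangeability of the $n$ children of $S$ yields $E[\beta(\T^+)\,g(\cc(S),\T_1)]=\tfrac1n E[\cc(S)\,g(\cc(S),\T_1)]$; the factor $n$ cancels, and summing over $n$ with weights $p_n$ recombines $S$ into an unconditioned $\mathbf{GW}$ tree. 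Relabelling $S$ as $\T^+$ and $\T_1$ as $\T$ reproduces exactly the target expression of the previous paragraph, which closes the identity.

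I expect the regrouping/exchangeability step to be the main obstacle, both in recognizing that $\{\T^+,\T_2,\dots,\T_n\}$ may legitimately be read as the children of a single $\mathbf{GW}$ tree and in the bookkeeping of which objects stay independent and $\mathbf{GW}$-distributed after each symmetrization. Everything else—the two uses of \eqref{eq:cond-symmetry} and \eqref{eq:C-Bsum}, together with the finiteness bound—is routine once this combinatorial identity is correctly set up.
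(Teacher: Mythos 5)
Your proposal is correct and follows essentially the same route as the paper's proof: expand $\kappa_\lambda(\cc(T))$ with an auxiliary independent tree, cancel the $\cc(T)$ from the harmonic transition probabilities against the parallel law, symmetrize over the auxiliary tree together with the non-distinguished children, regroup them into a fresh $\mathbf{GW}$ tree, and invoke \eqref{eq:cond-symmetry}. The only (cosmetic) difference is that you apply \eqref{eq:cond-symmetry} to the target side at the outset, whereas the paper applies it as the final step.
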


\begin{proof}
The function $\kappa_\lambda \colon \R^+ \to \R^+$ is bounded and strictly increasing. 
In fact, for $\mathbf{GW}$-a.e.~$T$, $\lambda-1+\beta(T)> 0$. The function
\begin{equation*}
\frac{\beta(T)x}{\lambda-1+\beta(T)+x}
\end{equation*}
is strictly increasing in $x$, and it is bounded above by $\beta(T)$.
Thus, $\kappa_\lambda(x) < \mathbf{GW}[\beta(T)]< 1$.

We write $\nu$ for the offspring number of the root of $T$. Conditionally on the event $\{\nu=k\}$, let $T_1,\ldots, T_k$ denote the descendant trees of the children of the root. In order to prove the $\mathsf{HARM}_\lambda$-stationarity, we must verify that for any bounded measurable function $F$ on $\t$, the integral $\int F(T)\kappa_\lambda(\cc(T))\mathbf{GW}(\mathrm{d}T)$ is equal to 
\begin{align*}
I \colonequals & \sum_{k=1}^\infty p_k \sum_{i=1}^k \int F(T_i)\kappa_\lambda(\cc(T))\frac{\beta(T_i)}{\beta(T_1)+\cdots+\beta(T_k)} \mathbf{GW}(\mathrm{d}T\mid \nu=k) \\
= & \sum_{k=1}^\infty k p_k \int F(T_1)\kappa_\lambda(\cc(T))\frac{\beta(T_1)}{\beta(T_1)+\cdots+\beta(T_k)} \mathbf{GW}(\mathrm{d}T\mid \nu=k).
\end{align*}
Using the definition of $\kappa_\lambda$ and the branching property, we see that $I$ is given by 
\begin{align*}
& \sum_{k=1}^\infty k p_k \int F(T_1)\frac{\beta(T_0)\beta(T_1)}{\lambda-1+\beta(T_0)+\beta(T_1)+\cdots+\beta(T_k)} \mathbf{GW}(\mathrm{d}T\mid \nu=k)\mathbf{GW}(\mathrm{d}T_0) \\
=& \sum_{k=1}^\infty k p_k \int F(T_1)\frac{\beta(T_0)\beta(T_1)}{\lambda-1+\beta(T_0)+\beta(T_1)+\cdots+\beta(T_k)} \mathbf{GW}(\mathrm{d}T_0)\mathbf{GW}(\mathrm{d}T_1)\cdots \mathbf{GW}(\mathrm{d}T_k) \\
=& \sum_{k=1}^\infty p_k \int F(T_1)\frac{\beta(T_1)(\beta(T_0)+\beta(T_2)+\cdots+\beta(T_k))}{\lambda-1+\beta(T_0)+\beta(T_1)+\cdots+\beta(T_k)} \mathbf{GW}(\mathrm{d}T_0)\mathbf{GW}(\mathrm{d}T_1)\cdots \mathbf{GW}(\mathrm{d}T_k) \\
=& \int F(T_1) \frac{\beta(T_1)\cc(T)}{\lambda-1+\beta(T_1)+\cc(T)}\mathbf{GW}(\mathrm{d}T)\mathbf{GW}(\mathrm{d}T_1).
\end{align*}
Hence, it follows from \eqref{eq:cond-symmetry} that 
\begin{equation*}
I= \int F(T_1) \frac{\beta(T)\cc(T_1)}{\lambda-1+\beta(T)+\cc(T_1)}\mathbf{GW}(\mathrm{d}T) \mathbf{GW}(\mathrm{d}T_1) =\int F(T_1)\kappa_\lambda(\cc(T_1))\mathbf{GW}(\mathrm{d}T_1),
\end{equation*}
which finishes the proof. 
\end{proof}

We deduce from the preceding lemma that the Radon--Nikod\'ym derivative of $\mu_{\mathsf{HARM}_\lambda}$ with respect to $\mathbf{GW}$ is a.s. 
\begin{equation}
\label{eq:density-Harm}
\frac{\mathrm{d}\mu_{\mathsf{HARM}_\lambda}}{\mathrm{d}\mathbf{GW}} (T)= \frac{1}{h_\lambda} \kappa_\lambda(\cc(T))=
\frac{1}{h_\lambda} \int \frac{\beta(T')\cc(T)}{\lambda-1+\beta(T')+\cc(T)} \mathbf{GW}(\mathrm{d}T'),
\end{equation}
where the normalizing constant 
\begin{equation*}
h_\lambda = \int \frac{\beta(T')\cc(T)}{\lambda-1+\beta(T')+\cc(T)} \mathbf{GW}(\mathrm{d}T)\mathbf{GW}(\mathrm{d}T') = E\bigg[\frac{\beta(\T)\cc(\T^+)}{\lambda-1+\beta(\T)+\cc(\T^+)} \bigg].
\end{equation*}
Writing $\mathcal{R}(T)=\cc(T)^{-1}$ for the effective resistance, one can reformulate \eqref{eq:density-Harm} as 
\begin{equation*}
\frac{\mathrm{d}\mu_{\mathsf{HARM}_\lambda}}{\mathrm{d}\mathbf{GW}} (T)= 
\frac{1}{h_\lambda} \int \frac{\lambda^{-1}}{(\lambda-1)\mathcal{R}(T)\mathcal{R}(T')+\mathcal{R}(T)+\mathcal{R}(T')+\lambda^{-1}} \mathbf{GW}(\mathrm{d}T').
\end{equation*}
When $\lambda=1$, it coincides with the expression of the same density in Section 8 of~\cite{LPP95}.

As we can see in the proof of Lemma~\ref{lemma:harm-invariance}, the mesure $\mu_{\mathsf{HARM}_\lambda}$ defined by~\eqref{eq:density-Harm} is still $\mathsf{HARM}_\lambda$-stationary when $p_0>0$ is allowed. 
We also point out that the proof of Proposition 17.31 in \cite{LP-book} (corresponding to the case $\lambda=1$) can be adapted to derive \eqref{eq:density-Harm} from the construction of $\mu_{\mathsf{HARM}_\lambda}$ by inducing. 

In a recent work \cite{Rou}, Rousselin develops a general result to construct explicit stationary measures for a certain class of Markov chains on trees. Applying his result to the $\mathsf{HARM}_\lambda$-Markov chain considered above gives the same formula \eqref{eq:density-Harm}, see Theorem 4.1 in \cite{Rou}.

\section{Dimension of the harmonic measure}
\label{sec:dim-harm}
Let $\mathsf{T}$ be a random tree distributed as $\mu_{\mathsf{HARM}_\lambda}$, and let $\Theta$ be the $\lambda$-harmonic ray in $\mathsf{T}$. If we denote the vertices along $\Theta$ by $\Theta_0, \Theta_1, \ldots$, then according to the flow property of harmonic measure, the sequence of descendant trees $(\mathsf{T}_{\Theta_n})_{n\geq 0}$ is a stationary $\mathsf{HARM}_\lambda$-Markov chain. 
In what follows, we write $\mathsf{HARM}_\lambda\times \mu_{\mathsf{HARM}_\lambda}$ for the law of $(\Theta, \mathsf{T})$ on the space $\{(\xi, T)\mid T\in \t, \xi \in \partial T\}$. 
Recall that the ergodicity of $\mathsf{HARM}_\lambda\times \mu_{\mathsf{HARM}_\lambda}$ results from Proposition 5.2 in \cite{LPP95}.

As shown in \cite[Section 5]{LPP95}, the Hausdorff dimension $d_\lambda$ of the $\lambda$-harmonic measure coincides with the entropy 
\begin{equation*}
\mathrm{Entropy}_{\mathsf{HARM}_\lambda}(\mu_{\mathsf{HARM}_\lambda}) \colonequals 
\int \log \frac{1}{\mathrm{HARM}_\lambda^T(\xi_1)} \mathsf{HARM}_\lambda\times \mu_{\mathsf{HARM}_\lambda}(\mathrm{d}\xi,\mathrm{d}T).
\end{equation*}
Thus, by \eqref{eq:conductance} we have 
\begin{eqnarray*}
d_\lambda &= & \int \log \frac{\cc(T)}{\beta(T_{\xi_1})} \mathsf{HARM}_\lambda\times \mu_{\mathsf{HARM}_\lambda}(\mathrm{d}\xi,\mathrm{d}T)\\
&=& \int \log \frac{\lambda\beta(T)}{\beta(T_{\xi_1})(1-\beta(T))} \mathsf{HARM}_\lambda\times \mu_{\mathsf{HARM}_\lambda}(\mathrm{d}\xi,\mathrm{d}T).
\end{eqnarray*}
By stationarity, 
\begin{equation}
\label{eq:dim-formula}
d_\lambda = \int \log \frac{\lambda}{1-\beta(T)} \mu_{\mathsf{HARM}_\lambda}(\mathrm{d}T) = \int \log \big(\cc(T)+\lambda \big) \mu_{\mathsf{HARM}_\lambda}(\mathrm{d}T),
\end{equation}
provided the integral $\int \log \beta(T)^{-1} \mu_{\mathsf{HARM}_\lambda}(\mathrm{d}T)$ is finite. 
Using the explicit form \eqref{eq:density-Harm} of $\mu_{\mathsf{HARM}_\lambda}$, we see that this integral is equal to 
\begin{equation*}
h_\lambda^{-1} E \bigg[ \frac{\beta(\T)\cc(\T^+)}{\lambda-1+\beta(\T)+\cc(\T^+)} \log \frac{1}{\beta(\T^+)}\bigg],
\end{equation*}
in which the expectation is less than 
\begin{equation}
\label{eq:product-expectation}
E \bigg[ \frac{\beta(\T)}{\lambda-1+\beta(\T)} \Big(\cc(\T^+)\log \frac{1}{\beta(\T^+)}\Big)\bigg]= E\bigg[\frac{\beta(\T)}{\lambda-1+\beta(\T)} \bigg] \cdot
E \bigg[ \frac{\lambda\beta(\T^+)}{1-\beta(\T^+)} \log \frac{1}{\beta(\T^+)}\bigg].
\end{equation}
Notice that for $x\in (0,1)$, 
\begin{equation*}
0<\frac{x}{1-x}\log\frac{1}{x}<1.
\end{equation*}
Hence, the product in \eqref{eq:product-expectation} is bounded by 
\begin{equation*}
\mathbf{GW}\bigg[\frac{\lambda \beta(T)}{\lambda-1+\beta(T)} \bigg],
\end{equation*}
which is finite according to \eqref{eq:Aid-lemma}. 
Therefore, the formula \eqref{eq:dim-formula} is justified. By \eqref{eq:density-Harm} again, we obtain
\begin{eqnarray*}
d_\lambda &=& h_\lambda^{-1} \int \log \big(\cc(T)+\lambda\big) \frac{\beta(T')\cc(T)}{\lambda-1+\beta(T')+\cc(T)} \mathbf{GW}(\mathrm{d}T)\mathbf{GW}(\mathrm{d}T') \\
&=& h_\lambda^{-1} E\bigg[\log (\cc(\T^+)+\lambda) \frac{\beta(\T)\cc(\T^+)}{\lambda-1+\beta(\T)+\cc(\T^+)} \bigg].
\end{eqnarray*}

Now let us prove Theorem~\ref{thm:dim-harm} by first showing $d_\lambda>\mathbf{GW}[\log \nu]$. Recall that the function
$\kappa_\lambda$ is strictly increasing.
The FKG inequality implies that 
\begin{equation*}
E\bigg[\log (\cc(\T^+)+\lambda) \frac{\beta(\T)\cc(\T^+)}{\lambda-1+\beta(\T)+\cc(\T^+)} \bigg] > E\big[\log (\cc(\T^+)+\lambda)\big] \times E\bigg[\frac{\beta(\T)\cc(\T^+)}{\lambda-1+\beta(\T)+\cc(\T^+)} \bigg].
\end{equation*}
In view of the previous formula for $d_\lambda$, it suffices to prove 
\begin{equation*}
\mathbf{GW}[\log (\cc(T)+\lambda)]\geq \mathbf{GW}[\log \nu].
\end{equation*}
In fact, the strict inequality holds. Recall the notation that $T_1,\ldots, T_\nu$ stand for the descendant trees of the children of the root in $T$, and notice that 
\begin{equation*}
\cc(T)+\lambda=\frac{\cc(T)}{\beta(T)}=\frac{\sum_{i=1}^\nu \beta(T_i)}{\beta(T)}.
\end{equation*}
By strict concavity of the log function, 
\begin{equation*}
\log \sum_{i=1}^{\nu} \beta(T_i) \geq \frac{1}{\nu} \sum_{i=1}^{\nu} \log (\nu \beta(T_i))=\log \nu + \frac{1}{\nu} \sum_{i=1}^{\nu} \log \beta(T_i)
\end{equation*}
with equality if and only if all $\beta(T_i), 1\leq i\leq \nu,$ are equal. 
But this condition for equality cannot hold for $\mathbf{GW}$-almost every $T$. 
Meanwhile, it follows from Lemma~\ref{lem:mean-resistance-log} that 
\begin{equation*}
\mathbf{GW}\left[\frac{1}{\nu} \sum_{i=1}^{\nu} \log \beta(T_i)\right]=\mathbf{GW}[\log \beta(T)].
\end{equation*}
Therefore, 
\begin{equation*}
\mathbf{GW}[\log (\cc(T)+\lambda)] > \mathbf{GW}[\log \nu].
\end{equation*}

To complete the proof of Theorem~\ref{thm:dim-harm}, it remains to examine the asymptotic behaviors of $d_\lambda$.
When $\lambda\to 0^+$, a.s.~$\beta(\T)\to 1$ and $\cc(\T^+)=\sum_{i=1}^{\nu^+} \beta(\T^+_i) \to \nu^+$. 
Since 
\begin{equation}
\label{eq:upper-bd-harm-density}
\frac{\beta(\T)\cc(\T^+)}{\lambda-1+\beta(\T)+\cc(\T^+)} \leq \beta(\T) \leq 1,
\end{equation}
we can use Lebesgue's dominated convergence to get $\lim_{\lambda \to 0^+} h_\lambda= 1$.
Similarly, it follows from 
\begin{equation*}
\log(\cc(\T^+)+\lambda)\frac{\beta(\T)\cc(\T^+)}{\lambda-1+\beta(\T)+\cc(\T^+)} \leq \log(\cc(\T^+)+\lambda) \leq \log(\nu^+ +m)
\end{equation*}
that $\lim_{\lambda \to 0^+} d_\lambda = E[\log \nu^+]= \mathbf{GW}[\log \nu]$.

When $\lambda\to m^-$, a.s.~$\beta(\T)\to 0$ and $\cc(\T^+)\to 0$. We have seen that the FKG inequality yields the lower bound
\begin{equation*}
d_\lambda > E[\log(\cc(\T^+)+\lambda)].
\end{equation*}
Using again dominated convergence, we obtain
\begin{equation*}
\lim_{\lambda \to m^-}  E[\log(\cc(\T^+)+\lambda)]=\log m.
\end{equation*}
On the other hand, recall that $d_\lambda <\log m$.
Consequently, $d_\lambda \to \log m$ when $\lambda \to m^-$.

\section{Average number of children along a random path}
\label{sec:average-nb-child}
Recall that for every vertex $x$ in a tree $T$, we write $\nu(x)$ for its number of children. 
Birkhoff's ergodic theorem implies that for $\mathsf{HARM}_\lambda\times \mu_{\mathsf{HARM}_\lambda}$-a.e.~$(\xi, T)$, 
\begin{equation*}
\lim_{n\to \infty} \frac{1}{n} \sum_{k=0}^{n-1} \nu(\xi_k)= \int \nu(e)\mu_{\mathsf{HARM}_\lambda}(\mathrm{d}T)= h_\lambda^{-1} E\bigg[ \frac{\nu^+ \beta(\T)\cc(\T^+)}{\lambda-1+\beta(\T)+\cc(\T^+)} \bigg].
\end{equation*}
The last expectation is finite, as we derive from \eqref{eq:upper-bd-harm-density} that
\begin{equation*}
\frac{\nu^+ \beta(\T)\cc(\T^+)}{\lambda-1+\beta(\T)+\cc(\T^+)} \leq \nu^+ .
\end{equation*}
Since $\mu_{\mathsf{HARM}_\lambda}$ is equivalent to $\mathbf{GW}$, the convergence above also holds for $\mathsf{HARM}_\lambda\times \mathbf{GW}$-a.e.~$(\xi, T)$. Hence, the average number of children of the vertices visited by the $\lambda$-harmonic ray in a Galton--Watson tree is the same as the $\mu_{\mathsf{HARM}_\lambda}$-mean degree of the root. 

For every $k\geq 1$, we set 
\begin{equation*}
A(k) \colonequals E\bigg[ \frac{\beta(\T)\sum_{i=1}^{k}\beta(\T_i^+)}{\lambda-1+\beta(\T)+\sum_{i=1}^{k}\beta(\T_i^+)} \bigg].
\end{equation*}
The sequence $(A(k))_{k\geq 1}$ is strictly increasing. Moreover, 
\begin{equation*}
\frac{A(k)}{k}=E\bigg[ \frac{\beta(\T)\beta(\T_1^+)}{\lambda-1+\beta(\T)+\sum_{i=1}^{k}\beta(\T_i^+)} \bigg]
\end{equation*}
is strictly decreasing with respect to $k$.

\begin{proposition}
\label{prop:child-harm-visi}
For $0<\lambda<m$, 
\begin{equation*}
\int \nu(e)\mu_{\mathsf{HARM}_\lambda}(\mathrm{d}T) > m. 
\end{equation*}
Furthermore, $\int \nu(e)\mu_{\mathsf{HARM}_\lambda}(\mathrm{d}T) \to m$ as $\lambda\to 0^+$.
\end{proposition}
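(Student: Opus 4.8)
The plan is to read the mean degree of the root under $\mu_{\mathsf{HARM}_\lambda}$ off the formula
\begin{equation*}
\int \nu(e)\mu_{\mathsf{HARM}_\lambda}(\mathrm{d}T) = h_\lambda^{-1}\, E\bigg[ \frac{\nu^+ \beta(\T)\cc(\T^+)}{\lambda-1+\beta(\T)+\cc(\T^+)} \bigg]
\end{equation*}
by expanding both the numerator and $h_\lambda$ as series in the quantities $A(k)$. Conditioning on $\{\nu^+=k\}$, the branching property makes $\T,\T^+_1,\ldots,\T^+_k$ independent $\mathbf{GW}$ trees and gives $\cc(\T^+)=\sum_{i=1}^k\beta(\T^+_i)$; hence the conditional expectation of the harmonic weight $\beta(\T)\cc(\T^+)/(\lambda-1+\beta(\T)+\cc(\T^+))$ equals $A(k)$, and
\begin{equation*}
h_\lambda = \sum_{k\geq 1} p_k\, A(k),
\qquad
E\bigg[ \frac{\nu^+ \beta(\T)\cc(\T^+)}{\lambda-1+\beta(\T)+\cc(\T^+)} \bigg] = \sum_{k\geq 1} k\, p_k\, A(k).
\end{equation*}
Therefore
\begin{equation*}
\int \nu(e)\mu_{\mathsf{HARM}_\lambda}(\mathrm{d}T) = \frac{\sum_{k\geq 1} k\, p_k\, A(k)}{\sum_{k\geq 1} p_k\, A(k)},
\end{equation*}
and since $\sum_{k\geq 1}p_k A(k)=h_\lambda>0$, the target inequality $\int \nu(e)\mu_{\mathsf{HARM}_\lambda}(\mathrm{d}T)>m$ is equivalent to $\sum_{k\geq 1}(k-m)\,p_k\,A(k)>0$.

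To establish the latter I would view it as a correlation inequality between the two strictly increasing sequences $(k)_{k\geq 1}$ and $(A(k))_{k\geq 1}$ weighted by $(p_k)$. Concretely, using $\sum_k p_k=1$ and $\sum_k k\,p_k=m$, the doubling identity
\begin{equation*}
\sum_{k\geq 1}(k-m)\,p_k\,A(k)=\frac{1}{2}\sum_{k,\ell\geq 1}(k-\ell)\big(A(k)-A(\ell)\big)\,p_k\,p_\ell
\end{equation*}
expresses the quantity as a manifestly nonnegative sum, because $A$ is increasing (as recorded just before the statement). Strict positivity then follows from non-degeneracy of the offspring law: $(p_k)$ charges two distinct values $k_1<k_2$, and since $A$ is \emph{strictly} increasing the summands indexed by $(k_1,k_2)$ and $(k_2,k_1)$ contribute $(k_1-k_2)(A(k_1)-A(k_2))\,p_{k_1}p_{k_2}>0$. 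This proves $\int \nu(e)\mu_{\mathsf{HARM}_\lambda}(\mathrm{d}T)>m$.

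For the asymptotics as $\lambda\to 0^+$ I would argue by dominated convergence, as in the proof of Theorem~\ref{thm:dim-harm}. From the bound $\beta(T)>1-\lambda$ we get $\beta(\T)\to 1$ and $\beta(\T^+_i)\to 1$ almost surely, so $\cc(\T^+)=\sum_{i=1}^{\nu^+}\beta(\T^+_i)\to\nu^+$ and both integrands converge pointwise,
\begin{equation*}
\frac{\beta(\T)\cc(\T^+)}{\lambda-1+\beta(\T)+\cc(\T^+)}\longrightarrow 1,
\qquad
\frac{\nu^+\beta(\T)\cc(\T^+)}{\lambda-1+\beta(\T)+\cc(\T^+)}\longrightarrow \nu^+ .
\end{equation*}
The estimate \eqref{eq:upper-bd-harm-density} dominates the first integrand by $1$ and the second by $\nu^+$, which is integrable since $E[\nu^+]=m<\infty$. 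Dominated convergence therefore gives $h_\lambda\to 1$ and the numerator tends to $E[\nu^+]=m$, whence the ratio tends to $m$.

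The only genuinely delicate point is the first paragraph: recognizing that the mean degree is a ratio of the two series $\sum_k p_k A(k)$ and $\sum_k k\,p_k A(k)$, so that the desired inequality is exactly the statement that the offspring number $\nu^+$ is positively correlated with its harmonic weight $A(\nu^+)$. Once this reduction is made, the strict monotonicity of $A(k)$ (already in hand) together with non-degeneracy of $(p_k)$ closes the first claim, and the limit is a routine dominated-convergence argument.
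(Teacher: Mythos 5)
Your proof is correct and follows essentially the same route as the paper: after conditioning on $\nu^+$ you reduce the claim to the correlation inequality $\mathbf{GW}[\nu A(\nu)]>\mathbf{GW}[\nu]\cdot\mathbf{GW}[A(\nu)]$, which the paper obtains by citing the FKG (Harris) inequality and which you prove directly via the Chebyshev symmetrization $\sum_{k,\ell}(k-\ell)(A(k)-A(\ell))p_kp_\ell\geq 0$ together with strict monotonicity of $A$ and non-degeneracy of $(p_k)$. The dominated-convergence argument for the limit as $\lambda\to 0^+$ is identical to the paper's.
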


\begin{proof}
The first assertion, reformulated as 
\begin{equation*}
E\bigg[ \frac{\nu^+ \beta(\T)\sum_{i=1}^{\nu^+}\beta(\T_i^+)}{\lambda-1+\beta(\T)+\sum_{i=1}^{\nu^+}\beta(\T_i^+)} \bigg] > E[\nu^+] \cdot E\bigg[ \frac{\beta(\T)\sum_{i=1}^{\nu^+}\beta(\T_i^+)}{\lambda-1+\beta(\T)+\sum_{i=1}^{\nu^+}\beta(\T_i^+)} \bigg],
\end{equation*}
is a simple consequence of the FKG inequality, since
\begin{equation*}
\mathbf{GW}[\nu A(\nu)]>\mathbf{GW}[\nu] \cdot \mathbf{GW}[A(\nu)].
\end{equation*}

When $\lambda\to 0^+$, a.s.~$\beta(\T)\to 1$ and $\cc(\T^+)\to \nu^+$. Using Lebesgue's dominated convergence, we have seen at the end of Section~\ref{sec:dim-harm} that $\lim_{\lambda \to 0^+} h_\lambda= 1$.
The same argument applies to the convergence of 
\begin{equation*}
E\bigg[ \frac{\nu^+ \beta(\T)\cc(\T^+)}{\lambda-1+\beta(\T)+\cc(\T^+)} \bigg]
\end{equation*}
towards $E[\nu^+]=m$. 
\end{proof}

Under $\mathbf{GW}$ we define a random variable $\hat \nu$ having the size-biased distribution of $\nu$.

\begin{proposition}
\label{prop:child-harm-unif}
For $0<\lambda<m$, 
\begin{equation*}
\int \nu(e)\mu_{\mathsf{HARM}_\lambda}(\mathrm{d}T) < \mathbf{GW}[\hat\nu]= m^{-1}\textstyle \sum k^2 p_k. 
\end{equation*}
If we assume further that $\sum k^3 p_k<\infty$, then $\int \nu(e)\mu_{\mathsf{HARM}_\lambda}(\mathrm{d}T) \to \mathbf{GW}[\hat\nu]$ as $\lambda\to m^-$.
\end{proposition}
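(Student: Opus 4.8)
The plan is to turn the $\mu_{\mathsf{HARM}_\lambda}$-mean degree into a ratio governed by the monotone sequence $A(k)/k$. Set $a_k\colonequals A(k)/k$ and $\bar\beta\colonequals\mathbf{GW}[\beta(T)]$. Conditioning on $\nu^+=k$ in the formula recalled at the beginning of this section and using $A(k)=ka_k$, one gets
\begin{equation*}
\int \nu(e)\mu_{\mathsf{HARM}_\lambda}(\mathrm dT)=\frac{E[\nu^+f_\lambda]}{E[f_\lambda]}=\frac{\sum_{k}k^2p_k\,a_k}{\sum_{k}kp_k\,a_k},\qquad f_\lambda\colonequals\frac{\beta(\T)\cc(\T^+)}{\lambda-1+\beta(\T)+\cc(\T^+)},
\end{equation*}
because $E[f_\lambda]=\sum_kp_kA(k)$ and $E[\nu^+f_\lambda]=\sum_kkp_kA(k)$.

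For the strict inequality I would read this ratio under the size-biased law of $\nu$. Writing $\hat\nu$ for the size-biased offspring number, so that $\mathbf{GW}[\nu g(\nu)]=m\,\mathbf{GW}[g(\hat\nu)]$ for every $g$, the displayed ratio equals $\mathbf{GW}[\hat\nu\,a_{\hat\nu}]/\mathbf{GW}[a_{\hat\nu}]$, and the claim $\int\nu(e)\mu_{\mathsf{HARM}_\lambda}(\mathrm dT)<\mathbf{GW}[\hat\nu]$ becomes
\begin{equation*}
\mathbf{GW}[\hat\nu\,a_{\hat\nu}]<\mathbf{GW}[\hat\nu]\,\mathbf{GW}[a_{\hat\nu}].
\end{equation*}
Since $k\mapsto k$ is strictly increasing while $k\mapsto a_k$ is strictly decreasing (the monotonicity of $A(k)/k$ noted above), and the non-degeneracy of $(p_k)$ makes $\hat\nu$ charge at least two values, the two functions are strictly negatively correlated; this is the Chebyshev sum inequality, the mirror image of the FKG argument used in Proposition~\ref{prop:child-harm-visi}. (When $\sum k^2p_k=\infty$ the inequality is trivial, the right-hand side being infinite and the left-hand side at most $m/h_\lambda$.)

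For the limit $\lambda\to m^-$ I would factor $a_1$ out of numerator and denominator, rewriting the ratio as $\big(\sum_kk^2p_k(a_k/a_1)\big)\big/\big(\sum_kkp_k(a_k/a_1)\big)$, and show that $a_k/a_1\to1$ for each fixed $k$. Take $\lambda\in(1,m)$, so that $\lambda-1>0$. Using the independence of the i.i.d.\ escape probabilities $\beta(\T),\beta(\T_1^+),\dots,\beta(\T_k^+)$ and bounding every denominator below by $\lambda-1$, one has
\begin{equation*}
0\le a_1-a_k=E\bigg[\frac{\beta(\T)\beta(\T_1^+)\sum_{i=2}^{k}\beta(\T_i^+)}{\big(\lambda-1+\beta(\T)+\beta(\T_1^+)\big)\big(\lambda-1+\beta(\T)+\sum_{i=1}^{k}\beta(\T_i^+)\big)}\bigg]\le\frac{(k-1)\,\bar\beta^{\,3}}{(\lambda-1)^2},
\end{equation*}
whereas $\beta\le1$ forces the denominator of $a_1$ below $\lambda+1$, giving $a_1\ge\bar\beta^{\,2}/(\lambda+1)$. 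Hence $0\le1-a_k/a_1\le(k-1)\bar\beta\,(\lambda+1)(\lambda-1)^{-2}$. As $\lambda\to m^-$ one has $\beta=\beta_\lambda(T)\to0$ $\mathbf{GW}$-a.s.\ (as already used in Section~\ref{sec:dim-harm}), so $\bar\beta\to0$ by dominated convergence and the bound vanishes for each fixed $k$. Since $0\le a_k/a_1\le1$ by the monotonicity of $a_k$, the summands are dominated by $k^2p_k$ and $kp_k$, which are summable under the hypothesis $\sum k^3p_k<\infty$; dominated convergence then sends the numerator to $\sum k^2p_k$ and the denominator to $m$, whence the ratio tends to $m^{-1}\sum k^2p_k=\mathbf{GW}[\hat\nu]$.

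The delicate point is this last exchange of $\lambda\to m^-$ with the infinite sum over $k$: both $a_1$ and $a_k$ degenerate to $0$ at criticality, and a direct expansion of $f_\lambda$ around $\lambda=m$ would force one to control awkward moment ratios such as $\mathbf{GW}[\beta^2]/\mathbf{GW}[\beta]$, whose behaviour near the critical bias is not transparent. Normalizing by $a_1$ is precisely what circumvents this: the common order of magnitude $\bar\beta^{\,2}$ cancels, and the argument reduces to the elementary facts that $\bar\beta\to0$ and that $a_k/a_1$ is bounded by $1$.
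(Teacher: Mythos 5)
Your proof of the strict inequality coincides with the paper's: both reduce the claim to the strict negative correlation, under the size-biased law of $\nu$, between the increasing function $k\mapsto k$ and the strictly decreasing function $k\mapsto a_k=A(k)/k$, and conclude by the one-variable FKG (Chebyshev sum) inequality --- your display $\mathbf{GW}[\hat\nu\,a_{\hat\nu}]<\mathbf{GW}[\hat\nu]\,\mathbf{GW}[a_{\hat\nu}]$ is literally the paper's $\mathbf{GW}[A(\hat\nu)]<\mathbf{GW}[\hat\nu]\cdot\mathbf{GW}[A(\hat\nu)/\hat\nu]$, and your handling of the case $\sum k^2p_k=\infty$ matches the paper's remark that the left-hand side is finite.

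For the limit $\lambda\to m^-$, however, you take a genuinely different route. The paper replaces the denominator $\lambda-1+\beta(\T)+\cc(\T^+)$ by $m-1$ in both $h_\lambda$ and $E[\nu^+f_\lambda]$ and controls the resulting error via the uniform $L^2(\mathbf{GW})$-boundedness of $\beta(T)/\mathbf{GW}[\beta(T)]$ from Proposition 3.1 of \cite{BHOZ}; the third-moment hypothesis is what makes the size-biased version of that comparison go through. You instead normalize the ratio $\sum_k k^2p_k a_k/\sum_k kp_k a_k$ by $a_1$ and show by bare-hands independence estimates that $0\le 1-a_k/a_1\le (k-1)\bar\beta(\lambda+1)(\lambda-1)^{-2}\to 0$ for each fixed $k$. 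I checked the steps: the identity for $a_1-a_k$ is correct, the lower bound $\lambda-1>0$ on the denominators is legitimate since $\lambda\to m^->1$, the bound $a_1\ge\bar\beta^{\,2}/(\lambda+1)$ is right, and since $0\le a_k/a_1\le 1$ the final dominated convergence only needs $\sum k^2p_k<\infty$. What your approach buys is twofold: it is self-contained (no appeal to \cite{BHOZ}), and it actually proves the convergence under the weaker hypothesis $\sum k^2p_k<\infty$ rather than $\sum k^3p_k<\infty$ --- the common degenerate order $\bar\beta^{\,2}$ of the $a_k$ cancels in the ratio, which is exactly the awkwardness the paper's normalization by $E[\cdot/(m-1)]$ has to fight with moment conditions. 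The only unproved ingredient you share with the paper is $\mathbf{GW}[\beta_\lambda(T)]\to 0$ as $\lambda\to m^-$, which the paper also uses without further justification in Section~\ref{sec:dim-harm}.
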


\begin{proof}
Since $\int \nu(e)\mu_{\mathsf{HARM}_\lambda}(\mathrm{d}T) < \infty$, we may assume $\sum k^2 p_k<\infty$ throughout the proof.
The inequality in the first assertion can be written as 
\begin{equation*}
E[\nu^+] \cdot E\bigg[\frac{\nu^+ \beta(\T)\sum_{i=1}^{\nu^+}\beta(\T_i^+)}{\lambda-1+\beta(\T)+\sum_{i=1}^{\nu^+}\beta(\T_i^+)} \bigg]< E[(\nu^+)^2] \cdot E\bigg[\frac{\beta(\T)\sum_{i=1}^{\nu^+}\beta(\T_i^+)}{\lambda-1+\beta(\T)+\sum_{i=1}^{\nu^+}\beta(\T_i^+)} \bigg].
\end{equation*}
By conditioning on $\nu^+$, we see that it is equivalent to 
\begin{equation*}
\mathbf{GW}[A(\hat \nu)]< \mathbf{GW}[\hat \nu] \cdot \mathbf{GW}\bigg[\frac{A(\hat \nu)}{\hat \nu}\bigg],
\end{equation*}
which results from the FKG inequality. 

For the second assertion, remark that 
\begin{eqnarray*}
E\bigg[ \frac{\nu^+ \beta(\T)\sum_{i=1}^{\nu^+}\beta(\T_i^+)}{m-1} \bigg] &=& 
\frac{\mathbf{GW}[\nu^2]\cdot \mathbf{GW}[\beta(T)]^2}{m-1} ,\\
E\bigg[ \frac{\beta(\T)\sum_{i=1}^{\nu^+}\beta(\T_i^+)}{m-1} \bigg] &=& 
\frac{\mathbf{GW}[\nu]\cdot \mathbf{GW}[\beta(T)]^2}{m-1}.
\end{eqnarray*}
When the offspring distribution $p$ admits a second moment, Proposition 3.1 of \cite{BHOZ} shows that 
\begin{equation*}
\frac{\beta(T)}{\mathbf{GW}[\beta(T)]}
\end{equation*}
is uniformly bounded in $L^2(\mathbf{GW})$. Using this fact, we can verify that
\begin{equation*}
\lim_{\lambda\to m^-}  h_\lambda \cdot E\bigg[ \frac{\beta(\T)\sum_{i=1}^{\nu^+}\beta(\T_i^+)}{m-1} \bigg]^{-1} = 1.
\end{equation*}
With the third moment condition $\sum k^3 p_k<\infty$, we similarly have
\begin{equation*}
\lim_{\lambda\to m^-} E\bigg[ \frac{\nu^+ \beta(\T)\sum_{i=1}^{\nu^+}\beta(\T_i^+)}{\lambda-1+\beta(\T)+\sum_{i=1}^{\nu^+}\beta(\T_i^+)} \bigg] \cdot E\bigg[ \frac{\nu^+ \beta(\T)\sum_{i=1}^{\nu^+}\beta(\T_i^+)}{m-1} \bigg]^{-1}=1.
\end{equation*}
Therefore,
\begin{equation*}
\int \nu(e)\mu_{\mathsf{HARM}_\lambda}(\mathrm{d}T) \to \frac{\mathbf{GW}[\nu^2]}{\mathbf{GW}[\nu]}
=\mathbf{GW}[\hat\nu]
\end{equation*}
as $\lambda\to m^-$.
\end{proof}

Now we turn to investigate the average number of children seen by the $\lambda$-biased random walk. First of all, as remarked in \cite[Section 8]{LPP95}, the ergodicity of $\mathsf{HARM}_\lambda\times \mu_{\mathsf{HARM}}$ implies that $\mathsf{RW}_\lambda\times \mathbf{AGW}_\lambda$ is also ergodic.
For a tree $T$ rooted at $e$, let $\nu^+(e)$ denote the number of children of the root minus 1. 
Since 
\begin{equation*}
E\bigg[\frac{\nu^+(\lambda+\nu^+) \beta(\T)}{\lambda-1+\beta(\T)+\cc(\T^+)} \bigg]
= E\bigg[\frac{(\lambda+\nu^+)\cc(\T^+)}{\lambda-1+\beta(\T)+\cc(\T^+)} \bigg]  \leq \lambda+ E[\nu^+]<\infty,
\end{equation*}
it follows from Birkhoff's ergodic theorem that for $\mathsf{RW}_\lambda\times \mathbf{AGW}_\lambda$-a.e.~$(\overset\rightarrow x, (T,\xi))$, 
\begin{equation}
\label{eq:average-nb-child-walk}
\lim_{n\to \infty} \frac{1}{n} \sum_{k=0}^{n-1} \nu(x_k)= \int \nu^+(e) \mathbf{AGW}_\lambda(\mathrm{d}T,\mathrm{d}\xi)= c_\lambda^{-1} E\bigg[\frac{\nu^+(\lambda+\nu^+) \beta(\T)}{\lambda-1+\beta(\T)+\cc(\T^+)} \bigg].
\end{equation}
Using arguments similar to those in the last remark on page 600 of \cite{LPP95}, we deduce that the average number of children seen by the $\lambda$-biased random walk on $\T$ is a.s.~given by the same integral $\int \nu^+(e) \mathbf{AGW}_\lambda(\mathrm{d}T,\mathrm{d}\xi)$.

\begin{proposition}
\label{prop:child-rw-path}
We have  
\begin{equation*}
\int \nu^+(e) \mathbf{AGW}_\lambda(\mathrm{d}T,\mathrm{d}\xi) \quad \left\{
   \begin{aligned}
   <m &\quad \mbox{when $0< \lambda < 1$};  \\
   =m &\quad \mbox{when $\lambda \in \{0,1\}$}; \\
   >m &\quad \mbox{when $1< \lambda <m$}. \\
   \end{aligned}
  \right.
\end{equation*}

\end{proposition}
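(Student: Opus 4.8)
The plan is to read off the sign of $\int \nu^+(e)\,\mathbf{AGW}_\lambda(\mathrm dT,\mathrm d\xi)-m$ from a single covariance and then resolve that sign through the FKG inequality, exactly as in the proofs of Propositions~\ref{prop:child-harm-visi} and~\ref{prop:child-harm-unif}. First I would set
\[
D\colonequals \frac{(\lambda+\nu^+)\,\beta(\T)}{\lambda-1+\beta(\T)+\cc(\T^+)},
\]
so that $c_\lambda=E[D]$ and, by \eqref{eq:average-nb-child-walk}, $\int \nu^+(e)\,\mathbf{AGW}_\lambda=c_\lambda^{-1}E[\nu^+D]$; all these expectations are finite by the bound established just before \eqref{eq:average-nb-child-walk}. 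Since $\nu^+$ has mean $m$ under $P$, this rewrites as
\[
\int \nu^+(e)\,\mathbf{AGW}_\lambda-m=c_\lambda^{-1}\bigl(E[\nu^+D]-m\,E[D]\bigr)=c_\lambda^{-1}\operatorname{Cov}(\nu^+,D).
\]
Conditioning on $\nu^+$ and writing $g(k)\colonequals E[D\mid \nu^+=k]$, the tower property gives $\operatorname{Cov}(\nu^+,D)=\operatorname{Cov}\bigl(\nu^+,g(\nu^+)\bigr)$, so the whole statement is governed by the monotonicity of the sequence $(g(k))_{k\ge 1}$.

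Next I would compute $g$ explicitly. By the branching property, conditionally on $\{\nu^+=k\}$ the trees $\T,\T^+_1,\dots,\T^+_k$ are i.i.d.\ under $\mathbf{GW}$; writing $\beta_0=\beta(\T)$, $\beta_i=\beta(\T^+_i)$ and $\Sigma_k=\beta_0+\sum_{i=1}^{k}\beta_i$, we have $g(k)=(\lambda+k)\,E\bigl[\beta_0/(\lambda-1+\Sigma_k)\bigr]$. Exchangeability of $\beta_0,\dots,\beta_k$ then yields
\[
(k+1)\,E\Bigl[\frac{\beta_0}{\lambda-1+\Sigma_k}\Bigr]=E\Bigl[\frac{\Sigma_k}{\lambda-1+\Sigma_k}\Bigr]=1-(\lambda-1)\,E\Bigl[\frac{1}{\lambda-1+\Sigma_k}\Bigr],
\]
and combining this with $\lambda+k=(k+1)+(\lambda-1)$ produces the clean identity
\[
g(k)=1-(\lambda-1)\,\rho(k),\qquad \rho(k)\colonequals E\Bigl[\frac{1-\beta(\T)}{\lambda-1+\beta(\T)+\sum_{i=1}^{k}\beta(\T^+_i)}\Bigr].
\]
Since $\beta(\T)<1$ and $\lambda-1+\beta(\T)>0$ hold $\mathbf{GW}$-a.s., the integrand defining $\rho(k)$ is a.s.\ positive, and inserting one further positive term $\beta(\T^+_{k+1})$ into its denominator strictly decreases it; hence $\rho(k)>0$ and $\rho$ is strictly decreasing. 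Consequently $g$ is strictly increasing when $\lambda>1$, strictly decreasing when $\lambda<1$, and $g\equiv 1$ when $\lambda=1$.

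To conclude, for the single non-degenerate random variable $\nu^+$ and a strictly monotone $g$, the FKG (Chebyshev sum) inequality gives $\operatorname{Cov}(\nu^+,g(\nu^+))>0$ when $g$ is increasing and $<0$ when $g$ is decreasing; the inequality is strict because, writing the covariance as $\tfrac12 E[(\nu_1-\nu_2)(g(\nu_1)-g(\nu_2))]$ for independent copies $\nu_1,\nu_2$ of $\nu^+$, the integrand keeps a fixed sign and is nonzero on the positive-probability event $\{\nu_1\neq\nu_2\}$. Combined with the first paragraph this gives $\int \nu^+(e)\,\mathbf{AGW}_\lambda>m$ for $\lambda\in(1,m)$ and $<m$ for $\lambda\in(0,1)$. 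For $\lambda=1$ the identity $g\equiv1$ forces $\operatorname{Cov}(\nu^+,D)=0$, so the integral equals $m$; and for $\lambda=0$ the forward walk sees average number of children $m$ by the strong law of large numbers, as recalled in the introduction.

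I expect the second step to be the main obstacle. The difficulty is that $D$ is \emph{not} a monotone function of $\nu^+$ on the underlying product space: raising $\nu^+$ enlarges both the numerator $\lambda+\nu^+$ and the denominator $\sum_i\beta(\T^+_i)$, so the sign of the covariance is genuinely a competition between these two effects and cannot be settled by applying FKG directly to $(\nu^+,D)$. The resolution is that the \emph{conditional mean} $g(k)$ is monotone, and the identity $g(k)=1-(\lambda-1)\rho(k)$ with $\rho$ manifestly positive and strictly decreasing both establishes this monotonicity and pins its direction to $\operatorname{sign}(\lambda-1)$, which is precisely the asserted trichotomy. Once this identity is in hand, the remaining steps are routine.
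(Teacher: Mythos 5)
Your proof is correct and follows essentially the same route as the paper: both reduce the claim to the monotonicity in $k$ of $B_\lambda(k)=E[D\mid\nu^+=k]$ (your $g(k)$) and conclude by FKG applied to $\nu^+$, your covariance formulation being equivalent to the paper's comparison of $\mathbf{GW}[B_\lambda(\hat\nu)]$ with $\mathbf{GW}[B_\lambda(\nu)]$. The only (cosmetic) difference is that you obtain the monotonicity via the clean identity $g(k)=1-(\lambda-1)\rho(k)$, whereas the paper computes $B_\lambda(k+1)-B_\lambda(k)$ directly and discards the exchangeable term; the underlying exchangeability argument is the same.
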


\begin{proof}
For every integer $k\geq 1$ we set 
\begin{equation*}
B_\lambda(k) \colonequals E\bigg[ \frac{(\lambda+k)\beta(\T)}{\lambda-1+\beta(\T)+\sum_{i=1}^{k}\beta(\T_i^+)} \bigg].
\end{equation*}
Clearly, we have
\begin{equation*}
\int \nu^+(e) \mathbf{AGW}_\lambda(\mathrm{d}T,\mathrm{d}\xi)= m \frac{\mathbf{GW}[B_\lambda(\hat \nu)]}{\mathbf{GW}[B_\lambda(\nu)]}.
\end{equation*} 
When $\lambda\in \{0,1\}$, $B_\lambda(k)= 1$ for all $k$. We will show that the sequence $(B_\lambda(k))_{k\geq 1}$ is strictly decreasing when $0<\lambda <1$, and strictly increasing when $1<\lambda<m$. Therefore, by the FKG inequality, $\mathbf{GW}[B_\lambda(\hat \nu)]>\mathbf{GW}[B_\lambda(\nu)]$ when $1<\lambda<m$, and $\mathbf{GW}[B_\lambda(\hat \nu)]<\mathbf{GW}[B_\lambda(\nu)]$ when $0<\lambda <1$. 

To get the claimed monotonicity of the sequence $(B_\lambda(k))_{k\geq 1}$, notice that 
\begin{equation*}
B_\lambda(k+1)= E\bigg[ \frac{(\lambda+k)\beta(\T)+\beta(\T^+_{k+1})}{\lambda-1+\beta(\T)+\sum_{i=1}^{k+1}\beta(\T_i^+)} \bigg].
\end{equation*}
Simple calculations give
\begin{align*}
B_\lambda(k+1)-B_\lambda(k) & = E\bigg[ \frac{-(\lambda+k)\beta(\T)\beta(\T^+_{k+1})+\beta(\T^+_{k+1})(\lambda-1+\beta(\T)+\sum_{i=1}^{k}\beta(\T_i^+))}{(\lambda-1+\beta(\T)+\sum_{i=1}^{k+1}\beta(\T_i^+))(\lambda-1+\beta(\T)+\sum_{i=1}^{k}\beta(\T_i^+))} \bigg]\\
& = E\bigg[ \frac{\beta(\T^+_{k+1})(\lambda-1)(1-\beta(\T))}{(\lambda-1+\beta(\T)+\sum_{i=1}^{k+1}\beta(\T_i^+))(\lambda-1+\beta(\T)+\sum_{i=1}^{k}\beta(\T_i^+))} \bigg]\\
& \qquad + E\bigg[ \frac{-k\beta(\T^+_{k+1})\beta(\T)+ \beta(\T^+_{k+1})\sum_{i=1}^k\beta(\T_i^+)}{(\lambda-1+\beta(\T)+\sum_{i=1}^{k+1}\beta(\T_i^+))(\lambda-1+\beta(\T)+\sum_{i=1}^{k}\beta(\T_i^+))} \bigg].
\end{align*}
Since the last expectation vanishes, $B_\lambda(k+1)-B_\lambda(k)<0$ if and only if $\lambda<1$.
\end{proof}

As a consequence, when $0<\lambda\leq 1$, we have 
\begin{equation*}
\int \nu(e)\mu_{\mathsf{HARM}_\lambda}(\mathrm{d}T)> m \geq  \int \nu^+(e) \mathbf{AGW}_\lambda(\mathrm{d}T,\mathrm{d}\xi).
\end{equation*}

The next result, together with Proposition~\ref{prop:child-rw-path}, shows that $\int \nu^+(e) \mathbf{AGW}_\lambda(\mathrm{d}T,\mathrm{d}\xi)$ is not monotone with respect to $\lambda$.
\begin{proposition}
\label{prop:child-rw-path-limit-0}
As $\lambda\to 0^+$, $\int \nu^+(e) \mathbf{AGW}_\lambda(\mathrm{d}T,\mathrm{d}\xi)$ converges to $m$. 
\end{proposition}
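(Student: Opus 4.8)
The plan is to start from the identity
\begin{equation*}
\int \nu^+(e)\, \mathbf{AGW}_\lambda(\mathrm{d}T,\mathrm{d}\xi)= m\, \frac{\mathbf{GW}[B_\lambda(\hat \nu)]}{\mathbf{GW}[B_\lambda(\nu)]}
\end{equation*}
established in the proof of Proposition~\ref{prop:child-rw-path}, and to show that both $\mathbf{GW}[B_\lambda(\nu)]$ and $\mathbf{GW}[B_\lambda(\hat\nu)]$ tend to $1$ as $\lambda\to 0^+$, so that the ratio tends to $1$ and the integral tends to $m$. The crucial gain over a naive pointwise argument is that I will obtain \emph{uniform} two-sided bounds on $B_\lambda(k)$, valid simultaneously for all $k\geq 1$; this is what lets me pass to the limit without any moment assumption, even though $\hat\nu$ may fail to be integrable.

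These bounds come entirely from the elementary estimate $1-\lambda<\beta(T)\leq 1$, which holds for $\mathbf{GW}$-a.e.~$T$ (the lower bound is the coupling inequality recalled in Section~\ref{sec:esc}). Applying it to $\beta(\T)$ and to each $\beta(\T_i^+)$ inside the integrand defining $B_\lambda(k)$, namely
\begin{equation*}
\frac{(\lambda+k)\beta(\T)}{\lambda-1+\beta(\T)+\sum_{i=1}^{k}\beta(\T_i^+)},
\end{equation*}
I bound it from below by $(\lambda+k)(1-\lambda)/(\lambda+k)=1-\lambda$ (enlarging the denominator via $\beta\leq 1$), and from above by $(\lambda+k)/(k(1-\lambda))\leq (1+\lambda)/(1-\lambda)$, where the last step uses $k\geq 1$ to shrink the denominator via $\beta>1-\lambda$. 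Taking expectations gives
\begin{equation*}
1-\lambda\leq B_\lambda(k)\leq \frac{1+\lambda}{1-\lambda}\qquad \text{for every } k\geq 1.
\end{equation*}

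Since these bounds are independent of $k$ and both tend to $1$ as $\lambda\to 0^+$, the family $(B_\lambda(k))_{k\geq 1}$ converges to $1$ uniformly in $k$. Averaging against any probability distribution on $\{1,2,\dots\}$ therefore preserves the sandwich: both
\begin{equation*}
\mathbf{GW}[B_\lambda(\nu)]=\sum_{k}p_k B_\lambda(k)\quad \text{and}\quad \mathbf{GW}[B_\lambda(\hat\nu)]=\sum_k \frac{kp_k}{m}\, B_\lambda(k)
\end{equation*}
lie in $[1-\lambda,(1+\lambda)/(1-\lambda)]$ and hence converge to $1$, which yields $\int \nu^+(e)\,\mathbf{AGW}_\lambda(\mathrm{d}T,\mathrm{d}\xi)\to m$. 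The one point requiring care—and the reason I deliberately avoid dominated convergence over the offspring law—is precisely that $\hat\nu$ need not have finite mean; the uniform-in-$k$ squeeze sidesteps this entirely, so no moment hypothesis on $(p_k)$ is needed. I expect this uniformity to be the only step worth stating explicitly, the remainder being the arithmetic above.
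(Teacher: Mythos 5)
Your proof is correct, and it takes a genuinely different (and arguably cleaner) route than the paper's. The paper works directly with formula \eqref{eq:average-nb-child-walk}: it shows $c_\lambda\to 1$ by dominated convergence, kills the $\lambda\nu^+$ contribution, and then handles the problematic $(\nu^+)^2$ term — which cannot be dominated without a second moment — by the exchangeability identity
\begin{equation*}
E\bigg[\frac{(\nu^+)^2 \beta(\T)}{\lambda-1+\beta(\T)+\sum_{i=1}^{\nu^+}\beta(\T_i^+)} \bigg]=E\bigg[\frac{\nu^+\sum_{i=1}^{\nu^+}\beta(\T_i^+)}{\lambda-1+\beta(\T)+\sum_{i=1}^{\nu^+}\beta(\T_i^+)} \bigg],
\end{equation*}
whose integrand is bounded by $\nu^+$ (since $\lambda-1+\beta(\T)>0$), after which dominated convergence applies. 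You instead exploit the deterministic sandwich $1-\lambda<\beta\leq 1$ to get the uniform-in-$k$ bounds $1-\lambda\leq B_\lambda(k)\leq (1+\lambda)/(1-\lambda)$ (both steps check out: the denominator satisfies $k(1-\lambda)<\lambda-1+\beta(\T)+\sum_{i=1}^k\beta(\T_i^+)\leq \lambda+k$), and then a squeeze on the ratio $m\,\mathbf{GW}[B_\lambda(\hat\nu)]/\mathbf{GW}[B_\lambda(\nu)]$. Both arguments correctly sidestep the possible non-integrability of $\hat\nu$; yours does so without any appeal to the a.s.\ convergence $\beta(\T)\to 1$ or to dominated convergence, and as a bonus gives an explicit quantitative rate, namely that the integral lies in $\big[m(1-\lambda)^2/(1+\lambda),\, m(1+\lambda)/(1-\lambda)^2\big]$. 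The only presentational caveat is that you should say explicitly that you restrict to $\lambda<1$ so that the lower bound $\beta(T)>1-\lambda$ is nonvacuous, which is of course harmless for a limit as $\lambda\to 0^+$.
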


\begin{proof}
Note that 
\begin{equation*}
\frac{\beta(\T)}{\lambda-1+\beta(\T)+\sum_{i=1}^{\nu^+}\beta(\T_i^+)} \leq 1.
\end{equation*}
By Lebesgue's dominated convergence it follows that $\lim_{\lambda\to 0^+} c_\lambda=1$.
Similarly, we have
\begin{equation*}
\lim_{\lambda\to 0^+} E\bigg[\frac{\lambda \nu^+ \beta(\T)}{\lambda-1+\beta(\T)+\sum_{i=1}^{\nu^+}\beta(\T_i^+)} \bigg]=0.
\end{equation*}
On the other hand, 
\begin{equation*}
E\bigg[\frac{(\nu^+)^2 \beta(\T)}{\lambda-1+\beta(\T)+\sum_{i=1}^{\nu^+}\beta(\T_i^+)} \bigg]=E\bigg[\frac{\nu^+\sum_{i=1}^{\nu^+}\beta(\T_i^+)}{\lambda-1+\beta(\T)+\sum_{i=1}^{\nu^+}\beta(\T_i^+)} \bigg],
\end{equation*}
to which we can apply Lebesgue's dominated convergence again to get
\begin{equation*}
\lim_{\lambda\to 0^+} E\bigg[\frac{(\nu^+)^2 \beta(\T)}{\lambda-1+\beta(\T)+\sum_{i=1}^{\nu^+}\beta(\T_i^+)} \bigg]=E[\nu^+]=m.
\end{equation*}
In view of \eqref{eq:average-nb-child-walk}, the proof is thus finished.
\end{proof}

\begin{proposition}
\label{prop:child-rw-path-limit-m}
Assume that $\sum k^3 p_k<\infty$. Then,
\begin{equation*}
\lim_{\lambda\to m-} \int \nu^+(e) \mathbf{AGW}_\lambda(\mathrm{d}T,\mathrm{d}\xi) = \frac{m^2+ \sum k^2 p_k}{2m}.
\end{equation*}
\end{proposition}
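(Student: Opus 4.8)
The plan is to start from the representation \eqref{eq:average-nb-child-walk}, which expresses the quantity as a ratio of two expectations:
\[
\int \nu^+(e)\, \mathbf{AGW}_\lambda(\mathrm{d}T,\mathrm{d}\xi) = \frac{E\big[\nu^+(\lambda+\nu^+)\beta(\T)/D\big]}{E\big[(\lambda+\nu^+)\beta(\T)/D\big]}, \qquad D\colonequals \lambda-1+\beta(\T)+\cc(\T^+),
\]
and to analyse numerator and denominator after dividing each by $b_\lambda\colonequals E[\beta(\T)]=\mathbf{GW}[\beta(T)]$ (which leaves the ratio unchanged). As $\lambda\to m^-$ one has a.s.~$\beta(\T)\to 0$ and $\cc(\T^+)\to 0$, hence $b_\lambda\to 0$ by dominated convergence and $D\to m-1>0$; in particular, for $\lambda$ close enough to $m$ we have the uniform lower bound $D\geq \lambda-1\geq (m-1)/2>0$, so every denominator $D$ is harmless. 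The crucial structural fact I will use repeatedly is that $\beta(\T)$ is a function of $\T$ alone, hence independent of the pair $(\nu^+,\cc(\T^+))$, which depends only on $\T^+$.

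The leading contribution comes from replacing $1/D$ by the constant $1/(m-1)$. By the above independence and $\cc(\T^+)=\sum_{i=1}^{\nu^+}\beta(\T^+_i)$, the main terms factor cleanly:
\[
\frac{1}{b_\lambda}\,E\Big[\tfrac{(\lambda+\nu^+)\beta(\T)}{m-1}\Big] = \frac{E[\lambda+\nu^+]}{m-1} = \frac{\lambda+m}{m-1}\ \xrightarrow[\lambda\to m^-]{}\ \frac{2m}{m-1},
\]
\[
\frac{1}{b_\lambda}\,E\Big[\tfrac{\nu^+(\lambda+\nu^+)\beta(\T)}{m-1}\Big] = \frac{E[\nu^+(\lambda+\nu^+)]}{m-1} = \frac{\lambda m+\mathbf{GW}[\nu^2]}{m-1}\ \xrightarrow[\lambda\to m^-]{}\ \frac{m^2+\mathbf{GW}[\nu^2]}{m-1}.
\]
Taking the ratio of these two limits (the second being nonzero) yields $\frac{m^2+\mathbf{GW}[\nu^2]}{2m}$, which is the announced value since $\mathbf{GW}[\nu^2]=\sum k^2 p_k$.

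It remains to show that the contribution of $R\colonequals \frac1D-\frac1{m-1}=\frac{(m-\lambda)-\beta(\T)-\cc(\T^+)}{D(m-1)}$ to both numerator and denominator is $o(b_\lambda)$. Using $|R|\leq \frac{(m-\lambda)+\beta(\T)+\cc(\T^+)}{(\lambda-1)(m-1)}$, I bound the numerator error by a constant multiple of $b_\lambda^{-1}E\big[\nu^+(\lambda+\nu^+)\beta(\T)\big((m-\lambda)+\beta(\T)+\cc(\T^+)\big)\big]$ and split it into three pieces. The $(m-\lambda)$-piece equals $(m-\lambda)\,E[\nu^+(\lambda+\nu^+)]\to 0$ after dividing by $b_\lambda$ (independence). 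The $\beta(\T)^2$-piece factors as $E[\nu^+(\lambda+\nu^+)]\cdot E[\beta(\T)^2]/b_\lambda$, and since Proposition 3.1 of \cite{BHOZ} gives $E[\beta(\T)^2]\leq C\,b_\lambda^2$ (uniform $L^2$-boundedness of $\beta(T)/\mathbf{GW}[\beta(T)]$ under $\sum k^2 p_k<\infty$), this piece is $O(b_\lambda)\to 0$. The $\cc(\T^+)$-piece is the delicate one: pulling out the independent factor $\beta(\T)$ and conditioning on $\nu^+$ (under which the $\T^+_i$ are i.i.d.~$\mathbf{GW}$) gives $E[\nu^+(\lambda+\nu^+)\beta(\T)\cc(\T^+)]=b_\lambda^2\,E[(\nu^+)^2(\lambda+\nu^+)]$, so after dividing by $b_\lambda$ it is a bounded multiple of $b_\lambda\to 0$ — but only because $E[(\nu^+)^2(\lambda+\nu^+)]=\lambda\mathbf{GW}[\nu^2]+\mathbf{GW}[\nu^3]$ is finite, which is exactly where the hypothesis $\sum k^3 p_k<\infty$ enters. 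The same three-piece estimate applied to the denominator (with $\nu^+(\lambda+\nu^+)$ replaced by $\lambda+\nu^+$) needs only a second moment.

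The main obstacle is precisely this last piece: the $\cc(\T^+)$ correction brings down an extra power of $\nu^+$, forcing the appearance of $E[(\nu^+)^3]$ and hence the third-moment assumption, while the $\beta(\T)^2$-piece rests essentially on the uniform $L^2$-bound of \cite{BHOZ}. Once these error estimates are established, dividing numerator and denominator by $b_\lambda$ and passing to the limit gives $\int \nu^+(e)\,\mathbf{AGW}_\lambda(\mathrm{d}T,\mathrm{d}\xi)\to \frac{m^2+\sum k^2 p_k}{2m}$, as claimed.
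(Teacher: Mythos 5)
Your proposal is correct and follows essentially the same route as the paper: both replace the denominator $\lambda-1+\beta(\T)+\cc(\T^+)$ by the constant $\lambda-1$ (you use $m-1$, an immaterial difference), factor the main terms by independence to get the ratio $(\lambda m+\sum k^2p_k)/(\lambda+m)$, and control the error via the uniform $L^2$-bound on $\beta(T)/\mathbf{GW}[\beta(T)]$ from \cite{BHOZ} together with the third-moment hypothesis for the $\cc(\T^+)$-correction. Your write-up in fact supplies the error estimates that the paper's proof only sketches.
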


\begin{proof}
As for the analogous result in Proposition~\ref{prop:child-harm-unif},
we can use the uniform boundedness in $L^2(\mathbf{GW})$ of $\beta(T)/\mathbf{GW}[\beta(T)]$ to see that 
\begin{equation*}
\lim_{\lambda\to m^-}  c_\lambda \cdot E\bigg[ \frac{(\lambda+\nu^+) \beta(\T)}{\lambda-1} \bigg]^{-1} = 1= \lim_{\lambda\to m^-}  E\bigg[\frac{\nu^+(\lambda+\nu^+) \beta(\T)}{\lambda-1+\beta(\T)+\cc(\T^+)} \bigg] \cdot E\bigg[\frac{\nu^+(\lambda+\nu^+) \beta(\T)}{\lambda-1} \bigg]^{-1}.
\end{equation*}
Hence, it follows from 
\begin{equation*}
 E\bigg[ \frac{(\lambda+\nu^+) \beta(\T)}{\lambda-1} \bigg]^{-1} E\bigg[\frac{\nu^+(\lambda+\nu^+) \beta(\T)}{\lambda-1} \bigg]= \frac{E[\nu^+(\lambda+\nu^+)]}{E[\lambda+\nu^+]}=
 \frac{\lambda m+\sum k^2 p_k}{\lambda+m}
\end{equation*} 
that
\begin{equation*}
\lim_{\lambda\to m^-} c_\lambda^{-1} E\bigg[\frac{\nu^+(\lambda+\nu^+) \beta(\T)}{\lambda-1+\beta(\T)+\cc(\T^+)} \bigg]= \lim_{\lambda\to m^-} \frac{\lambda m+\sum k^2 p_k}{\lambda+m} 
=  \frac{m^2+ \sum k^2 p_k}{2m},
\end{equation*}
which finishes the proof by \eqref{eq:average-nb-child-walk}.
\end{proof}

Combining Propositions \ref{prop:child-harm-visi}, \ref{prop:child-harm-unif}, \ref{prop:child-rw-path-limit-0} and \ref{prop:child-rw-path-limit-m}, we see that 
\begin{equation*}
\lim_{\lambda\to 0^+} \bigg(\int \nu(e)\mu_{\mathsf{HARM}_\lambda}(\mathrm{d}T)- \int \nu^+(e) \mathbf{AGW}_\lambda(\mathrm{d}T,\mathrm{d}\xi)\bigg) = 0,
\end{equation*}
and if $\sum k^3 p_k<\infty$, 
\begin{equation*}
\lim_{\lambda\to m^-} \bigg(\int \nu(e)\mu_{\mathsf{HARM}_\lambda}(\mathrm{d}T)- \int \nu^+(e) \mathbf{AGW}_\lambda(\mathrm{d}T,\mathrm{d}\xi) \bigg)= \frac{\sum k^2 p_k -m^2}{2m} >0.
\end{equation*}
As mentioned in the introduction, we conjecture that for all $\lambda \in (0,m)$,
\begin{equation*}
\int \nu(e)\mu_{\mathsf{HARM}_\lambda}(\mathrm{d}T)- \int \nu^+(e) \mathbf{AGW}_\lambda(\mathrm{d}T,\mathrm{d}\xi)>0.
\end{equation*}

\medskip
\noindent{\bf Remark.} If we consider the average \emph{reciprocal} number of children of vertices along an infinite path in $\T$, the FKG inequality implies that for all $\lambda \in (0,m)$,
\begin{equation*}
\int \frac{1}{\nu^+(e)} \mathbf{AGW}_\lambda(\mathrm{d}T,\mathrm{d}\xi) > \frac{1}{m}. 
\end{equation*}
We also have 
\begin{equation*}
\int \frac{1}{\nu(e)} \mu_{\mathsf{HARM}_\lambda}(\mathrm{d}T) > \frac{1}{m} \mbox{ for all } \lambda \in (0,m),
\end{equation*}
by applying the FKG inequality similarly as in the proof of Proposition~\ref{prop:child-harm-unif}.

\medskip
\noindent{\bf Acknowledgment.} The author thanks Elie A\"id\'ekon and Pierre Rousselin for fruitful discussions. He is also indebted to an anonymous referee for several useful suggestions.


\begin{thebibliography}{99}

\bibitem{Aid}
{\sc E.~A\"id\'ekon}. Speed of the biased random walk on a Galton--Watson tree. {\it Probab.~Theory Relat.~Fields} {\bf 159} (2014), 597--617.

\bibitem{BHOZ}
{\sc G.~Ben Arous, Y.~Hu, S.~Olla and O.~Zeitouni}. Einstein relation for biased random walk on Galton--Watson trees. {\it Ann.~Inst.~H.~Poincar\'e Probab.~Statist.} {\bf 49} (2013), 698--721.

\bibitem{BHS}
{\sc A.~Ben-Hamou and J.~Salez}. Cutoff for nonbacktracking random walks on sparse random graphs. {\it Ann.~Probab.} {\bf 45} (2017), 1752--1770.

\bibitem{BLPS}
{\sc N.~Berestycki, E.~Lubetzky, Y.~Peres and A.~Sly}. Random walks on the random graph. {\it Ann.~Probab.}, {\bf 46} (2018), 456--490.

\bibitem{Ly90}
{\sc R.~Lyons}. Random walks and percolation on trees. {\it Ann.~Probab.} {\bf 18} (1990), 931--958.

\bibitem{LPP95}
{\sc R.~Lyons, R.~Pemantle and Y.~Peres}. Ergodic theory on Galton--Watson trees: Speed of random walk and dimension of harmonic measure. {\it Erg.~Theory Dynam.~Syst.} {\bf 15} (1995), 593--619.

\bibitem{LPP96}
{\sc R.~Lyons, R.~Pemantle and Y.~Peres}. Biased random walks on Galton--Watson trees. {\it Probab.~Theory Relat.~Fields} {\bf 106} (1996), 249--264.

\bibitem{LPP97}
{\sc R.~Lyons, R.~Pemantle and Y.~Peres}. Unsolved problems concerning random walks on trees. {\it IMA Vol.~Math.~Appl.} {\bf 84} (1997), 223--237.

\bibitem{LP-book}
{\sc R.~Lyons and Y.~Peres}. {\it Probability on Trees and Networks}. Cambridge University Press, New York, 2016, xv+699 pp.

\bibitem{Rou}
{\sc P.~Rousselin}. Invariant measures, Hausdorff dimension and dimension drop of some harmonic measures on Galton--Watson trees. {\it Electron.~J.~Probab.}, {\bf 23} (2018), no.~46, 1--31.

\bibitem{V2000}
{\sc B.~Vir\'ag}. On the speed of random walks on graphs. {\it Ann.~Probab.} {\bf 28} (2000), 379--394.


\end{thebibliography}
\end{document}